\newtheorem{theorem}{Theorem}[section]
\newtheorem{corollary}[theorem]{Corollary}
\newtheorem{definition}[theorem]{Definition}
\newtheorem{lemma}[theorem]{Lemma}
\newtheorem{proposition}[theorem]{Proposition}
\newenvironment{proof}[1][Proof]{\textbf{#1.} }{\ \rule{0.5em}{0.5em}}
\newcommand{\up}{\mathord{\uparrow}}
\newcommand{\lup}{\mathord{\upharpoonleft}}
\newcommand{\rup}{\mathord{\upharpoonright}}
\newcommand{\lexl}{\leqslant}               
\newcommand{\lexu}{\eqslantless}            
\newcommand{\overlap}{\between}             %
\newcommand{\psign}{\mathord{+}}
\newcommand{\msign}{\mathord{-}}
\newcommand{\true}{1}                       
\newcommand{\false}{0}                      
\newcommand{\lmid}{\mathrel{<\negmedspace\mid}}  
\newcommand{\midl}{\mathrel{\mid\negmedspace<}}  
\newcommand{\half}{\mathord{\mathsf{half}}} 
\newcommand{\Ball}{\mathord{\mathrm{Ball}}} 
\newcommand{\ball}{\mathord{\mathrm{ball}}} 
\newcommand{\Idl}{\mathord{\mathrm{Idl}}}   
\newcommand{\Fr}{\mathbf{Fr}}               
\newcommand{\PreFr}{\mathbf{PreFr}}         
\newcommand{\Loc}{\mathbf{Loc}}             
\newcommand{\bigdvee}{\mathop{\bigvee\nolimits^\uparrow}}  
\newcommand{\bigdsqcup}{\mathop{\bigsqcup\nolimits^\uparrow}}  
\newcommand{\interval}{\mathbb{I}}         
\begin{document}

\title{The localic compact interval is an Escard\'{o}-Simpson interval object}
\author{Steven Vickers\\School of Computer Science, University of Birmingham,\\Birmingham, B15 2TT, UK.\\email: s.j.vickers@cs.bham.ac.uk}
\maketitle
\begin{abstract}
  The locale corresponding to the real interval $[-1,1]$ is an interval object, in the sense
  of Escard\'{o} and Simpson, in the category of locales.
  The map $c\colon 2^\omega \to [-1,1]$, mapping a stream $s$ of signs $\pm 1$ to
  $\Sigma_{i=1}^\infty s_i 2^{-i}$, is a proper localic surjection;
  it is also expressed as a coequalizer.
\end{abstract}

\section{Introduction}
In~\cite{EscardoSimpson:UniCCEI}, Escard\'{o} and Simpson prove a universal property for
the real interval $[-1,1]$, using a theory they develop of \emph{midpoint algebras}:
sets equipped with a binary operation that, abstractly, provides the midpoint of any two elements.
In an \emph{iterative} midpoint algebra there are also some limiting processes,
and it becomes possible there to define arbitrary convex combinations of two elements.
This property is expressed by saying that the interval $[-1,1]$ is freely generated,
as an iterative midpoint algebra, by its endpoints.
That is the universal property, and it thus characterizes the interval in a way that does not
explicitly describe the structure of reals.

The aim of this note is to prove an analogous property for the \emph{locale} $[-1,1]$ of Dedekind reals,
which we shall write $\interval$, in the category
$\mathbf{Loc}$ of locales.

The layout of the paper can be summarized section by section as follows.

Section~\ref{sec:ItMidptAlg} recalls midpoint algebras.

Section~\ref{sec:Cantor} develops some preliminary results on Cantor space $2^\omega$.
Principally, we analyse its localic presentation in order to get it in a
``join stable'' form suitable for the preframe coverage theorem,
a technical result used in Section~\ref{sec:ProperSurjn}.

Section~\ref{sec:ConvBody} shows as its main result that the interval $\interval$ is iterative.
Our proof relies on its metric structure,
and its embedding as the maximal points of a ``ball domain''.
The result of the iteration is then got via approximations in the ball domain.

Section~\ref{sec:c} introduces a map $c\colon 2^\omega\to\interval$ that can be
understood as the evaluation of infinite binary expansions.
We calculate some features of its inverse image function;
these results are needed in Section~\ref{sec:ProperSurjn}.

Section~\ref{sec:ProperSurjn} shows that $c$ is a localic surjection.
In fact it goes further and proves that it is a \emph{proper} surjection.
This is an essential part of the proof technique,
and also some such condition is needed to show later that $c\times c$
is also a surjection.
In essence this is a conservativity result:
to reason about real numbers it suffices to reason about the infinite binary expansions,
and this holds even in the absence of choice principles allowing one to choose an expansion
for every (Dedekind) real.
To prove it we use the preframe coverage theorem,
relying on the analysis of Sections~\ref{sec:Cantor} and~\ref{sec:c}.

Section~\ref{sec:Coequ} describes $c$ as the coequalizer of two maps from $2^\ast$ to $2^\omega$.

Section~\ref{sec:Initiality} now completes the proof that $\interval$ is an interval object.
Suppose we are given an iterative $A$ with two specified points as in Definition~\ref{def:intervalObject}~(3),
and we want to define the unique $N\colon\interval\to A$.
We find that $Nc = M$ (say) is easy to find,
so the task is to factor $M$ via $c$.
The unique existence of the factorization will follow from the coequalizer property of $c$.
It remains to show that $N$ preserves midpoints, and for this it is convenient to introduce
$3^\omega$, for streams of signs and zeros.

\section{Iterative midpoint algebras}\label{sec:ItMidptAlg}

We recall the definitions from \cite{EscardoSimpson:UniCCEI}, in an arbitrary
category with finite products.

\begin{definition}\label{def:midptAlg}
A \emph{midpoint algebra} is an object $A$ equipped with a morphism
$m\colon A\times A\rightarrow A$ satisfying the following conditions:%
\begin{align*}
m(x,x)  &  =x\\
m(x,y)  &  =m(y,x)\\
m(m(x,y),m(z,w))  &  =m(m(x,z),m(y,w))
\end{align*}

A homomorphism of midpoint algebras is a morphism that preserves the midpoint operation.

A midpoint algebra is \emph{cancellative} if it satisfies%
\[
m(x,z)=m(y,z)\Longrightarrow x=y\text{.}%
\]
\end{definition}

\begin{definition}\label{def:itMidptAlg}\label{def:convexBody}
A midpoint algebra $A$ is \emph{iterative} if, for every
object $X$ and pair of morphisms $h\colon X\rightarrow A$,
$t\colon X\rightarrow X$ (\emph{head} and \emph{tail}),
there is a unique morphism $M\colon X\rightarrow A$
making $M(x)=m(h(x),M(t(x))$ -- in other words, the following diagram commutes.%
\[
  \xymatrix{
    {A\times X}
      \ar@{->}[r]^{A\times M}
    & A\times A
      \ar@{->}[d]^{m}
    \\
    {X}
      \ar@{->}[u]^{\langle h,t\rangle}
      \ar@{->}[r]_{M}
    & {A}
  }
\]

A \emph{convex body} is a cancellative, iterative midpoint algebra.
\end{definition}

To illustrate the ``iterative'' condition,
a particular case would be where $X=\mathbb{N}$ and $t$ is the successor function.
Then $h$ is a sequence $(h_i)_{i\in\mathbb{N}}$.
In an affine setting, we would then have that $M(n)$ is the infinitary convex combination
\[
  M(n) = \sum_{i=n}^{\infty}\frac{1}{2^{i-n+1}}h_i
  \text{.}
\]

We now specialize to the category $\Loc$ of locales. The closed
Euclidean interval $\interval=[-1,1]$ is a cancellative midpoint algebra with
$m(x,y)=\frac{x+y}{2}$.
We shall think of the discrete two-point space $2$ as $\{\msign,\psign\}$,
so that Cantor space $2^{\omega}$ is the space of infinite sequences
(or \emph{streams}) of signs.

We also write $2^\ast$ for the set of finite sequences of signs,
$\varepsilon$ for the empty sequence,
$\sqsubseteq$ for the prefix order
and $|s|$ for the length of $s$.
We use juxtaposition to denote concatenation.

\begin{definition}\label{def:MPlusMinus}
  Suppose $A$ is an iterative midpoint algebra equipped with two points $a_{\psign}$ and $a_{\msign}$.
  We define $M_{a_{\msign}a_{\psign}}\colon 2^{\omega}\rightarrow A$ as the unique map such that%
  \[
    M_{a_{\msign}a_{\psign}}(\pm s)=m(a_{\pm},M_{a_{\msign}a_{\psign}}s)\text{.}%
  \]

  Referring to Definition~\ref{def:convexBody}, $X$ is $2^{\omega}$ and $h,t$
  are such that $\langle h,t\rangle(\pm s)=(a_{\pm},s)$ (so $t$ is the tail map
  in the usual sense).
\end{definition}

\begin{definition}\label{def:intervalObject}
  An \emph{interval object} $I$ is a free iterative
  midpoint algebra over 2. That is to say:

  \begin{enumerate}
  \item
    $I$ is equipped with two points $x_{\msign}$ and $x_{\psign}$ (its \emph{endpoints}).
  \item
    $I$ is an iterative midpoint algebra.
  \item
    For every iterative midpoint algebra $A$ with points $a_{\msign}$ and $a_{\psign}$
    there is a unique midpoint homomorphism $N\colon I\rightarrow A$ that
    takes $x_{\msign}$ and $x_{\psign}$ to $a_{\msign}$ and $a_{\psign}$ respectively.
  \end{enumerate}
\end{definition}

We shall prove that $\interval$, with endpoints $-1$ and $1$, is an interval object.

\section{Preliminary remarks on Cantor space}\label{sec:Cantor}

We take Cantor space $2^\omega$ to be the localic exponential of the discrete
locales $2$ (two points $\psign$ and $\msign$)
and $\mathbb{N}$ (natural numbers $1,2,3,\ldots$).
\footnote{There is a technical reason here for preferring to start at 1,
in that the first term in an infinite binary expansion is for $2^{-1}$.
For finite sequences too, the indexes will start at 1.}
This certainly exists, since discrete locales are locally compact. Its
(generalized) points can be described as the functions from $\mathbb{N}$ to $2$, and so
the frame can be presented by generators and relations as%
\begin{align*}
  \Fr\langle(n,\sigma)\in \mathbb{N}\times2\mid
       (n,\psign)\wedge (n,\msign) & \leq \false\\
                             \true &  \leq(n,\psign)\vee(n,\msign)\rangle
  \text{.}%
\end{align*}
(Here, abstractly, we write $\true$ and $\false$ for the top and bottom of a frame.
Where the locale has a definite name $X$, we shall also often write them as $X$ and $\emptyset$.)
Every generator $(n,\pm)$ has a Boolean complement $(n,\mp)$, so the locale is Stone.
Its frame is the ideal completion of the free Boolean algebra
on countably many generators $(n,\psign)$.

A little calculation shows that the frame is isomorphic to%
\begin{align*}
  \Fr\langle\up s\text{ (}s\in2^{\ast}\text{)} \mid
     \up t  &  \leq \up s\text{ (if }s\sqsubseteq t\text{)}\\
     \true  &  \leq \up\varepsilon\\
     \up s\wedge \up t   &   \leq \false \text{ (if }s,t\text{ incomparable)}\\
     \up s  &  \leq \up(s\msign)\vee \up(s\psign)\rangle\text{.}%
\end{align*}
The isomorphisms are given by%
\begin{align*}
\up s  &  \mapsto\bigwedge_{i=1}^{|s|}(i,s_{i})\\
(n,\sigma)  &  \mapsto\bigvee_{|s|=n-1}\up(s\sigma)\text{.}%
\end{align*}
The generators $\up s$ form a base.
$\up s$ comprises those streams of which $s$ is a prefix.

Later we shall need a preframe base, in other words opens of which every other
open is a directed join of finite meets, and for this we shall introduce
subbasics $\rup s$ and $\lup s$ that involve the
lexicographic ordering. Let us first introduce some notation.

\begin{definition}
If $s,t\in2^{\ast}$ then we write $s<t$ if there is some $u$ such that
$u\msign \sqsubseteq s$ and $u\psign \sqsubseteq t$.
We say that $s$ and $t$
\emph{differ} if either $s<t$ or $t<s$:
this is equivalent to their being incomparable under $\sqsubseteq$.
The relation $<$ extends to an open
$\bigvee_{u\in 2^\ast}\left(\up(u\msign)\times\up(u\psign)\right)$ of $2^{\omega}\times2^{\omega}$.

We write $s\lexl t$ if either $s<t$ or $s\sqsubseteq t$. This is just the
lexicographic order in which $\msign$ is less than $\psign$.

We write $s\lexu t$ if either $s<t$ or $t\sqsubseteq s$: in other
words, $t$ precedes $s$ in the dual lexicographic order with $\psign$ less than $\msign$.

Both $\lexl$ and $\lexu$ can be extended in the obvious way to the case where $s$ or $t$ may be infinite.

If $s\in 2^{\ast}$, then we define a \emph{right bristle} of $s$ to be a finite
sequence $t\psign$ such that $t\msign \sqsubseteq s$,
in other words a $u$ that is minimal (under $\sqsubseteq$) subject to $s<u$.
Dually, a \emph{left bristle} of $s$ is a $u$ minimal subject to $u<s$.
\end{definition}

\begin{definition}\label{def:leftRightHook}
If $s\in2^{\ast}$ then we define the open
$\rup s$ of $2^{\omega}$ as the finite join $\up s\vee
\bigvee\{\up t\mid t$ a right bristle of $s\}$. It comprises those $u$ in
$2^{\omega}$ such that $s\lexl u$.
Dually, we define
$\lup s= \up s\vee\bigvee\{\up t\mid t\text{ a left bristle of }s\}$,
comprising those $u$ such that $u\lexu s$.
\end{definition}

\begin{lemma}\label{lem:leftRightHook}
  $\rup$ and $\lup$ have the following properties.
  \begin{enumerate}
  \item
    $\up s= \rup s\wedge \lup s$.
  \item
    If $s\lexl t$ in $2^{\ast}$ then $\rup t\leq \rup s$;
    if $s\lexu t$ then $\lup s\leq \lup t$.
  \item
    $\rup(s\msign)= \rup s$; $\lup(s\psign)= \lup s$.
  \item
    $\rup s\vee \lup s=2^{\omega}$.
  \item
    If $t<s$ then $\rup s\wedge \lup t=\emptyset$.
  \item
    $\up s\leq \rup(s\psign)\vee \lup(s\msign)$.
  \end{enumerate}
\end{lemma}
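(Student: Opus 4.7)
The plan is to work entirely with the basic generators $\up s$ and reduce each claim to combinatorics of prefixes in $2^\ast$, using three frame-theoretic facts repeatedly: $\up t \leq \up s$ when $s \sqsubseteq t$; $\up t \wedge \up t' = \false$ when $t, t'$ are $\sqsubseteq$-incomparable; and $\up s = \up(s\msign) \vee \up(s\psign)$.

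I would start with (3). The right bristles of $s\msign$ are exactly the right bristles of $s$ together with $s\psign$ itself (since $u\msign \sqsubseteq s\msign$ iff $u\msign \sqsubseteq s$ or $u = s$), so $\rup(s\msign) = \up(s\msign) \vee \up(s\psign) \vee \bigvee\{\text{right bristles of }s\} = \up s \vee \bigvee\{\text{right bristles of }s\} = \rup s$. Item (6) is immediate: $\up s = \up(s\msign) \vee \up(s\psign) \leq \lup(s\msign) \vee \rup(s\psign)$ by weakening each summand. For item (1), I would distribute $\rup s \wedge \lup s$ as a join of meets $\up t \wedge \up t'$, where $t$ is $s$ or a right bristle and $t'$ is $s$ or a left bristle; the diagonal $\up s \wedge \up s$ gives $\up s$, and the interesting cross term pits a right bristle $u\psign$ (with $u\msign \sqsubseteq s$) against a left bristle $u'\msign$ (with $u'\psign \sqsubseteq s$). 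Since $u\msign$ and $u'\psign$ are both prefixes of $s$, they are $\sqsubseteq$-comparable, and a short length split shows $u\psign, u'\msign$ must be incomparable.

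For item (2), I would split on whether $s \sqsubseteq t$ or $s < t$. In the second case, if $u$ witnesses $s < t$ (so $u\msign \sqsubseteq s$, $u\psign \sqsubseteq t$), then $u\psign$ is itself a right bristle of $s$, giving $\up t \leq \up(u\psign) \leq \rup s$; and any right bristle $v\psign$ of $t$ (with $v\msign \sqsubseteq t$) is either already a right bristle of $s$ (if $|v\msign| < |u\psign|$, so that $v\msign$ is a proper prefix of $u\psign$, hence of $u\msign \sqsubseteq s$) or satisfies $\up(v\psign) \leq \up(u\psign)$ (if $|v\msign| > |u\psign|$). The first case $s \sqsubseteq t$ is similar but simpler. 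Item (4) is an induction on $|s|$: the base $s = \varepsilon$ is trivial as there are no bristles and $\rup\varepsilon = \up\varepsilon = 2^\omega$, and the step uses (3) together with the identity $\lup s = \lup(s\msign) \vee \up(s\psign)$ (left bristles of $s\msign$ coincide with those of $s$) and $\up(s\psign) \leq \up s \leq \rup s = \rup(s\msign)$.

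The main obstacle I expect is item (5). Given a witness $v$ for $t < s$, i.e.\ $v\msign \sqsubseteq t$ and $v\psign \sqsubseteq s$, the meet $\rup s \wedge \lup t$ expands into four classes of generator-meets: $\up s \wedge \up t$; $\up s$ against a left bristle of $t$; a right bristle of $s$ against $\up t$; and a right bristle $u\psign$ of $s$ against a left bristle $u'\msign$ of $t$. I would resolve each by exploiting that within $s$ the sequences $u\msign$ and $v\psign$ are comparable prefixes, and within $t$ similarly for $u'\psign$ and $v\msign$. A length case-split in each pairing forces the two sequences to disagree at one of the positions $|v|+1$, $|u|+1$, or $|u'|+1$. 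The trickiest subcase is the bristle--bristle one, where I would split according to whether each of $|u|, |u'|$ is less than or greater than $|v|$, verifying incomparability in each of the four resulting subcases.
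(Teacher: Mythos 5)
Your proof is correct and follows essentially the same approach as the paper's, reducing each item to prefix combinatorics in $2^\ast$ via the same three frame facts ($\up t\leq\up s$ for $s\sqsubseteq t$, $\up t\wedge\up t'=\false$ for incomparable $t,t'$, and $\up s=\up(s\msign)\vee\up(s\psign)$). The only notable divergence is in item (5), where the paper first invokes item (2) to reduce to the tidier meet $\rup(u\psign)\wedge\lup(u\msign)$ with $u$ the greatest common prefix of $s$ and $t$, while you run the full four-way case analysis on $\rup s\wedge\lup t$ directly; both are sound, but the paper's reduction trims the bookkeeping.
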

\begin{proof}
  (1) Suppose $t$ and $u$ are right and left bristles of $s$. They both differ
  from $s$, but cannot differ at the same place. Thus they must differ from each
  other, and we deduce that $\up t\wedge \up u=\emptyset$.

  (2) We prove only the first assertion, since the second is dual. If
  $s\sqsubseteq t$ then $\up t\leq \up s$, and any right bristle of
  $t$ either is a right bristle of $s$ or has $s$ as a prefix. If $s<t$ then
  there is a unique $t'\sqsubseteq t$ such that $t'$ is a right
  bristle of $s$. Then $\up t\leq \up t'$. Also, any right
  bristle of $t$ either is a right bristle of $t'$ -- and hence of $s$
  -- or has $t'$ as a prefix.

  (3) From $s\lexl s\msign$ we deduce $\rup(s\msign)\leq \rup
  s$. For the reverse, any right bristle of $s$ is also a right bristle of $s\msign$.
  Also, $\up s= \up(s\msign)\vee \up(s\psign)$, and $s\psign$ is a right
  bristle of $s\msign$. The other assertion is dual.

  (4) 
  We use induction on the length of $s$; the base case $s=\varepsilon$ is obvious.
  Using part (3), and also the fact that $s$ and $s\msign$ have the same left bristles, we find that
  \[
    \rup(s\msign)\vee\lup(s\msign)
      = \rup s \vee \up(s\msign) \vee \bigvee\{\up t \mid t \text{ a left bristle of } s \}
      = \rup s \vee \lup s = 2^\omega
    \text{.}
  \]
  By symmetry the same works for $s\psign$.

  (5) Let $u$ be the greatest common prefix of $s$ and $t$:
  then $u\msign \sqsubseteq t$ and $u\psign \sqsubseteq s$.
  It suffices to consider the case for $\lup(u\msign)\wedge\rup(u\psign)$,
  which is the meet of
  \[
    \left( \up(u\msign)\vee\bigvee\{\up u'\mid u' \text{ a left bristle of } u \right)
  \]
  and
  \[
    \left( \up(u\psign)\vee\bigvee\{\up u''\mid u'' \text{ a right bristle of } u \right)
    \text{.}
  \]
  If $u'$ and $u''$ are bristles as described,
  then $u\msign < u\psign$, $u\msign < u''$, $u' < u\psign$ and $u' < u < u''$
  and it follows that all the meets got by redistributing the expression are 0.

  (6) Because $\up s= \up(s\msign)\vee \up(s\psign)$.
\end{proof}

\begin{lemma}\label{lem:CantorPresn2}
\begin{align*}
\Omega 2^{\omega} \cong \Fr\langle \rup s, \lup s \text{ (}s\in2^{\ast}\text{)}\mid
      \rup t    &  \leq \rup s\text{ (}s\lexl t\text{)}\\
      \rup s    &  \leq \rup(s\msign)\\
      \lup s    &  \leq \lup t\text{ (}s\lexu t\text{)}\\
      \lup s    &  \leq \lup(s\psign)\\
      \true     &  \leq \rup\varepsilon\\
      \true     &  \leq \lup\varepsilon\\
      \true     &  \leq \rup s\vee \lup s\\
      \rup s\wedge \lup t  &  \leq \false  \text{ (}t<s\text{)}\\
      \rup s\wedge \lup s  &  \leq \rup(s\psign)\vee \lup(s\msign)
    \rangle
\end{align*}
\end{lemma}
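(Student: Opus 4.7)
The plan is to prove this new presentation is isomorphic to the earlier $\up s$-presentation of $\Omega 2^\omega$ from the start of the section, by constructing mutually inverse frame homomorphisms. Call the frame presented here $F$.

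Define $\varphi\colon F \to \Omega 2^\omega$ by sending each generator $\rup s$ or $\lup s$ to the open of the same name from Definition~\ref{def:leftRightHook}. Each of the nine displayed relations holds in $\Omega 2^\omega$: the four monotonicity/extension relations are Lemma~\ref{lem:leftRightHook}(2) and (3); $\rup\varepsilon = \lup\varepsilon = \true$ because $\varepsilon$ has no bristles; and the covering, disjointness, and splitting relations are parts (4), (5), (6) of that lemma. Conversely, define $\psi\colon \Omega 2^\omega \to F$ from the old presentation by $\up s \mapsto \rup s \wedge \lup s$. The four old relations reduce straightforwardly: $s \sqsubseteq t$ gives both $s \lexl t$ and $s \lexu t$; $\true \leq \up\varepsilon$ is immediate; incomparable $s,t$ satisfy $s<t$ or $t<s$, so the disjointness relation applies; and for the splitting relation one uses the derived identities $\rup(s\msign) = \rup s$ and $\lup(s\psign) = \lup s$ (which follow from the monotonicity/extension relations together with $s \sqsubseteq s\msign$ and $s \sqsubseteq s\psign$) combined with the ninth relation of $F$.

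The composite $\psi\varphi$ sends $\up s \mapsto \rup s \wedge \lup s \mapsto \up s$ by Lemma~\ref{lem:leftRightHook}(1), hence is the identity. The main obstacle is the opposite composite $\varphi\psi$, which sends the generator $\rup s$ of $F$ to the element
\[
  (\rup s \wedge \lup s) \vee \bigvee\{\rup t \wedge \lup t \mid t \text{ a right bristle of } s\},
\]
and we must verify this equals $\rup s$ in $F$ (the dual statement for $\lup s$ following by symmetry). I would proceed by induction on $|s|$. The base case $s=\varepsilon$ is immediate, since $\varepsilon$ has no right bristles. For the inductive step, note that the right bristles of $s\msign$ are those of $s$ together with $s\psign$, while the right bristles of $s\psign$ coincide with those of $s$. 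Together with the derived inequalities $\rup(s\psign) \leq \rup s$ and $\lup(s\msign) \leq \lup s$, the ninth relation provides the key identity
\[
  \rup s \wedge \lup s = (\rup s \wedge \lup(s\msign)) \vee (\rup(s\psign) \wedge \lup s);
\]
substituting this into the inductive expansion of $\rup s = \rup(s\msign)$ yields the required expansion for $\rup(s\msign)$, while meeting the inductive expansion with $\rup(s\psign)$ and using $\rup(s\psign) \leq \rup s$ (together with $\rup t \leq \rup(s\psign)$ for each right bristle $t$ of $s$) yields the expansion for $\rup(s\psign)$.
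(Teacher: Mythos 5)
Your proposal is correct and follows essentially the same route as the paper: the same pair of translations $\up s \leftrightarrow \rup s \wedge \lup s$, with the nontrivial direction being to verify in $F$ that $\rup s$ equals its expansion in terms of $\rup t \wedge \lup t$ over $s$ and its right bristles, by induction on $|s|$ using the ninth relation. The paper phrases the inductive step slightly differently (it proves only the $\leq$ direction and invokes the disjointness relation $\rup(s\psign)\wedge\lup(s\msign)\leq\false$ in the $s\psign$ case), but the substance is the same as your use of the derived identities $\rup(s\msign)=\rup s$ and $\lup(s\psign)=\lup s$.
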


\begin{proof}
The homomorphism from the frame as presented here to $\Omega2^{\omega}$ takes
$\rup s$ and $\lup s$ to the opens as in
Definition~\ref{def:leftRightHook}, and then Lemma~\ref{lem:leftRightHook}
shows that the relations are respected.
In the other direction we map $\up s$ to $\rup s\wedge \lup s$ and it is easily
shown that all the relations are respected.
In particular, for respect of the relation $\up s= \up(s\msign)\vee \up(s\psign)$ we must have
\begin{equation}\label{eq:upToLRup}
  \rup s\wedge \lup s= \left( \rup(s\msign)\wedge \lup(s\msign)\right)
    \vee  \left(  \rup(s\psign)\wedge \lup(s\psign) \right)
  \text{.}
\end{equation}
For $\geq$ we use that $\lup(s\pm)\leq\lup s$ and similarly for $\rup$.
For $\leq$ we apply distributivity to the right hand side.
For three of the conjuncts we use $\rup s \leq \rup(s\msign)$ and $\lup s \leq \lup(s\psign)$;
for the other we use the final relation
$\rup s\wedge \lup s  \leq \rup(s\psign)\vee \lup(s\msign)$.

Now Lemma~\ref{lem:leftRightHook}~(1) shows that one composite
takes $\up s$ to $\rup s\wedge \lup s$ and then back to $\up s$, so is the identity.
To show the other composite is the identity we need%
\[
  \rup s=(\rup s\wedge \lup s)\vee\bigvee_{t\in\mathord{\mathrm{RB}}(s)}
    (\rup t\wedge \lup t)\text{,}%
\]
where $\mathord{\mathrm{RB}}(s)$ is the set of right bristles for $s$, and
similarly for $\lup s$. The $\geq$ direction is easy, since if $t$
is a right bristle of $s$ then $s\lexl t$ and so $\rup
t\leq \rup s$.

For $\leq$ we use induction. The base case, $s=\varepsilon$, is clear. For the
induction step,%
\begin{align*}
\rup(s\pm)  &  = \rup(s\pm)\wedge \rup
s= \rup(s\pm)\wedge\left(  (\rup s\wedge
 \lup s)\vee\bigvee_{t\in\operatorname*{RB}(s)}(\rup
t\wedge \lup t)\right) \\
&  \leq(\rup(s\pm)\wedge \rup s\wedge
  \lup s)\vee\bigvee_{t\in\operatorname*{RB}(s\pm)}(\rup t\wedge
 \lup t)
\end{align*}
since every right bristle of $s$ is also a right bristle of $s\pm$.
Now using equation~\eqref{eq:upToLRup} we have
\[
  \rup(s\msign)\wedge \rup s\wedge \lup s\leq\left(  \rup(s\msign)\wedge \lup(s\msign)\right)  \vee
  \bigvee_{t\in\operatorname*{RB}(s\msign)}(\rup t\wedge \lup t)
\]
since $s\psign$ is a right bristle of $s\msign$, and%
\[
\rup(s\psign)\wedge \rup s\wedge \lup s\leq \rup(s\psign)\wedge \lup(s\psign)
\]
since $s\msign <s\psign$ giving $\rup(s\psign)\wedge \lup(s\msign)\leq0$.
\end{proof}

\section{$\interval$ is a convex body}\label{sec:ConvBody}
The main task in this section is to prove that $\interval$, as midpoint algebra,
is iterative.
We shall use the fact that it can be described as a localic completion~\cite{LocCompA},
and then to construct the map $M$ as in Definition~\ref{def:itMidptAlg} we shall use
approximations in the ball domain (\cite{LocCompB}, following the ideas of \cite{EdHeck}).

Recall that for the localic completion of a generalized metric space $X$ we use
the elements $(x,\varepsilon)\in X\times Q_+$,
where $Q_{+}$ is the set of positive rationals,
as ``formal open balls'' $B_\varepsilon(x)$ (centre $x$, radius $\varepsilon$).
We write $\ball(X)$ for $X\times Q_+$
and equip it with a transitive, interpolative ``refinement'' order%
\[
(x,\delta)\subset(y,\varepsilon)\text{ if }X(y,x)+\delta<\varepsilon\text{.}%
\]
Then the \emph{ball domain} $\Ball(X)$ is defined to be
the continuous dcpo \mbox{$\Idl(\ball(X),\supset)$}
(see~\cite{Infosys}).
Note that the small balls, the refined ones, are high in the
order. We therefore think of the points of the ball domain as rounded
\emph{filters} of formal balls.

There is a \emph{radius map} $r\colon \Ball(X)\rightarrow
\overleftarrow{[0,\infty)}$, with $r(F)$ the inf of the radii of the formal
balls in $F$. ($\overleftarrow{[0,\infty)}$ is the locale whose points are the
upper reals in that interval, namely inhabited, rounded, up-closed sets of
positive rationals.)

The localic completion $\overline{X}$ embeds in $\Ball(X)$; its
points are the \emph{Cauchy} filters, those containing formal balls of
arbitrarily small radius, i.e. the points of $\Ball(X)$ with
radius $0$.

\begin{proposition}\label{prop:Imetric}
  $\interval$ is the localic completion of the metric space $D$,
  the set of dyadic rationals (those with denominator a power of 2) in the range $(-1,1)$,
  with the usual metric.
\end{proposition}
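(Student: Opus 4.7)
The plan is to exhibit a frame isomorphism between $\Omega\interval$, given by its standard Dedekind-style presentation with generators of the form $(p,q)\cap[-1,1]$ for rationals $p<q$, and $\Omega\overline{D}$, presented as in \cite{LocCompA} with generators being formal balls $B_\varepsilon(d)$ for $d\in D$ and $\varepsilon\in Q_+$. Since both sides are presented by natural bases of open intervals in the real line, the comparison reduces to refining one basis by the other.

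First I would define the forward frame homomorphism $\Omega\overline{D}\to\Omega\interval$ by sending each formal ball $B_\varepsilon(d)$ to the open rational interval $(d-\varepsilon,d+\varepsilon)\cap[-1,1]$; the defining covering relations of the localic completion (each ball is covered by its strict refinements, the whole space is covered by balls of any fixed radius, overlapping balls satisfy the appropriate triangle-inequality relation) translate to relations obviously satisfied in $\Omega\interval$. In the reverse direction I would send each generator $(p,q)\cap[-1,1]$ to the directed join of those formal balls $B_\varepsilon(d)$ with $p<d-\varepsilon$ and $d+\varepsilon<q$; respect for the frame relations of $\interval$ reduces to the density of $D$ in $[-1,1]$, since any rational open subinterval is the union of the dyadic balls well inside it.

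That the two homomorphisms are mutually inverse can be checked on generators: one composite takes $B_\varepsilon(d)$ to the join of formal balls strictly refining it, which is $B_\varepsilon(d)$ itself by the ``rounded'' covering relation; the other sends a rational interval to itself by the density argument just described.

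The main obstacle is the treatment of the endpoints $\pm 1$, which lie in $\interval$ but not in $D$. One must check that a point at $\pm 1$ is nonetheless represented by a Cauchy filter of formal balls centred in $D$: for the endpoint $1$, for each $\varepsilon\in Q_+$ there is a dyadic $d\in D$ with $1-d<\varepsilon/2$, so that $B_\varepsilon(d)$ contains $1$, and dually for $-1$. This density at the boundary is exactly what forces the completion to be the closed interval $[-1,1]$ rather than $(-1,1)$, and it is the step where one must be careful to work constructively, using only the existence of dyadics strictly inside $(-1,1)$ approximating the endpoints from within.
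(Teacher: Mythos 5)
Your route is genuinely different from the paper's, and it is worth contrasting. The paper does not build a frame isomorphism from the presentations; it reuses the theorem from \cite{LocCompA} that $\mathbb{R}$ is the localic completion of $\mathbb{Q}$, expressed there as a geometric bijection between Dedekind sections $S$ and Cauchy filters $F$, and then isolates exactly the two changes: (a) replacing $\mathbb{Q}$ by the dyadics $D$ (using density), and (b) restricting to $[-1,1]$ by imposing the geometric axioms $1\notin L$, $-1\notin U$. The work then lies in checking that the bijection survives these two changes, principally that $S\subseteq S(F(S))$ and $F(S(F))\subseteq F$. Your plan, building mutually inverse frame homomorphisms directly between the two presentations, is a legitimate alternative: it is more self-contained (it does not presuppose $\mathbb{R}\cong\overline{\mathbb{Q}}$), but it duplicates work that \cite{LocCompA} has already done for the ambient line.

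There is, however, a gap in where you locate the difficulty. Whether $\pm1$ is represented by a Cauchy filter is immediate (take formal balls centred at dyadics tending to $1$) and is not the delicate point. The delicate point, which you would hit when showing that the composite $\Omega\interval\to\Omega\overline{D}\to\Omega\interval$ is the identity (equivalently, in the paper's terms, that $S\subseteq S(F(S))$), is this: given a dyadic $q<S$, one must produce a formal ball $(r,\delta)$ with $r\in D$, $r-\delta=q$ and $S<r+\delta$. The obvious centre $r=\tfrac14(q+3)$, which works when $q$ is not too far below $-1$, can fall outside $D$ when $q$ is very negative, and then a separate choice such as $(0,-q)$ is needed. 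Your ``density just inside $(-1,1)$'' observation does not supply this: the ball you need is not a small ball near the endpoint but a possibly large one whose left boundary is pinned at $q$ while its centre must still lie in $D$. Similarly, ``obviously satisfied'' is too quick for the locatedness-type relation $(p,s)\leq(p,q)\vee(r,s)$ of $\Omega\interval$ under your reverse map; roundedness of balls alone does not give it, and one must argue using the structure of $D$ as a dense subset of the line. Both of these are exactly the kind of verification the paper's concrete formulas are designed to handle.
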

\begin{proof}
  In~\cite{LocCompA} it is shown that $\mathbb{R}$ is the localic completion of $\mathbb{Q}$.
  We have to deal with two differences.
  First, $\mathbb{Q}$ is replaced by the dyadics, which is essentially straightforward
  because the dyadics are dense in the rationals.
  Note that although the centre $q$ of a formal ball must now be dyadic,
  the radius $\delta$ can be any positive rational.
  Second, we restrict to the closed interval.
  For a Dedekind section $S=(L,U)$ that is equivalent to imposing the geometric axioms
  $1\notin L$ and $-1\notin U$.

  The proof in~\cite{LocCompA} sets up a geometric bijection between Dedekind sections $S$
  and Cauchy filters $F$ of $\mathbb{Q}$ as follows.
  The Dedekind section $S(F)$ has for its upper and lower sections the two sets
  $\{ q\pm\delta \mid (q,\delta)\in F \}$.
  The Cauchy filter $F(S)$ comprises those $(q,\delta)$ for which
  $q-\delta < S < q+\delta$,
  where of course we now have to restrict to $q\in D$.

  The main difficulty is in showing that $S\subseteq S(F(S))$.
  Suppose $q<S$. We can find dyadic $q'$ with $q<q'<S$,
  and so without loss of generality we can assume $q$ is dyadic.
  We know that $q<1$ (otherwise $1<S$).
  Let $r=\frac14(q + 3)$, which is dyadic, and $\delta=\frac34(1-q)$.
  Then $q = r-\delta$, $r<1$ and $r+\delta = 1+\frac12(1-q)>1$
  so $S<r+\delta$.
  If $r\in D$ then $(r,\delta)$ provides a ball to show $q<S(F(S))$.
  If $r\leq -1$ (so also $q<-1$) then instead we can use $(0,-q)$.
  The argument for $S<q$ is symmetric.

  We also show that $F(S(F))\subseteq F$.
  Suppose $(r,\varepsilon),(r',\varepsilon')\in F$,
  so that $r-\varepsilon < S(F) < r'+\varepsilon'$.
  This interval is the ball $(q,\delta)$ where
  $q=\frac12(r-\varepsilon+r'+\varepsilon')$ and
  $\delta=\frac12(r'+\varepsilon'-r+\varepsilon)$.
  We must show that if $q\in D$ then $(q,\delta)\in F$,
  but this is so because there is some common refinement in $F$
  of $(r,\varepsilon)$ and $(r',\varepsilon')$,
  and it also refines $(q,\delta)$.
\end{proof}

We extend the midpoint map $m\colon \interval\times\interval\rightarrow\interval$
by allowing the second argument to be taken from a ball domain. In
$\Ball(D)$ we have a point with centre $0$ and radius $1$. As a
filter, it comprises those formal balls $(q,\delta)\supset(0,1)$. Let $B$ be
the up closure in $\Ball(D)$ of this point, and write $\bot$
for the point since it is bottom in $B$.
Note that if $F\in\Ball(D)$, then $\bot\sqsubseteq F$ iff $(0,1+\varepsilon)\in F$
for all $\varepsilon\in Q_+$.

\begin{lemma}
The embedding $i\colon \interval\hookrightarrow\Ball(D)$ factors via
$B$.
\end{lemma}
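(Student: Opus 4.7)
The plan is to exploit the explicit characterization of membership in $B$ noted just before the lemma: $F\in B$ iff $(0,1+\varepsilon)\in F$ for every $\varepsilon\in Q_+$. (Observe that $0\in D$, so $(0,1+\varepsilon)$ is a legitimate formal ball.) Combined with Proposition~\ref{prop:Imetric}, which identifies points of $\interval$ with Cauchy filters $F(S)$ for which $(q,\delta)\in F(S)$ iff $q-\delta<S<q+\delta$, the factorization claim reduces to the pointwise assertion: for every point $S$ of $\interval$ and every $\varepsilon\in Q_+$, one has $-(1+\varepsilon)<S<1+\varepsilon$.

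This pointwise fact is almost immediate from the defining geometric axioms distinguishing $\interval$ inside the Dedekind reals, namely $1\notin L$ and $-1\notin U$. To obtain $S<1+\varepsilon$, I would apply locatedness of the Dedekind section to the pair $1<1+\varepsilon$: this yields $1\in L$ or $1+\varepsilon\in U$, and the former is excluded by hypothesis. Dually, $-(1+\varepsilon)<S$ follows from locatedness applied to $-(1+\varepsilon)<-1$ together with $-1\notin U$.

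Since every ingredient is geometric --- the Cauchy filter description of Proposition~\ref{prop:Imetric}, the presentation of $B$ as the up-closure of the ``$(0,1)$'' filter in $\Ball(D)$, and the locatedness axiom for Dedekind sections --- this pointwise reasoning lifts to the required locale-theoretic factorization through $B\hookrightarrow\Ball(D)$. There is no real obstacle; the only point that merits attention is ensuring that the verification $(0,1+\varepsilon)\in F(S)$ is carried out uniformly in $\varepsilon$ and in the generic point $S$, so that it defines a locale map rather than merely a set-theoretic condition, and this is automatic given that all the data are geometric.
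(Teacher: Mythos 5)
Your proof is correct, but it takes a different route from the paper's. The paper works entirely inside the ball-domain picture: given the generic point $x$ of $\interval$ as a Cauchy filter, Cauchyness yields some $(r,\varepsilon/2)\in x$ with $r\in D$, and since $D\subseteq(-1,1)$ one gets $(r,\varepsilon/2)\subset(0,1+\varepsilon)$, whence $(0,1+\varepsilon)\in x$ by filteredness. You instead pass through the Dedekind-section correspondence of Proposition~\ref{prop:Imetric}: unwinding $(0,1+\varepsilon)\in F(S)$ to $-(1+\varepsilon)<S<1+\varepsilon$ and deducing this from locatedness together with the defining axioms $1\notin L$, $-1\notin U$. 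Both are sound. The paper's argument is slightly more economical in that it never leaves the ball-domain language and encodes the bound $[-1,1]$ only via the fact that all centres lie in $(-1,1)$; your argument is a touch longer (one extra translation step) but makes the role of the geometric axioms cutting $\interval$ out of $\mathbb{R}$ fully explicit, which some readers may find clearer. Your closing remark about geometricity is also appropriate: since $B$ is an intersection of opens $\up(0,1+\varepsilon)$ of $\Ball(D)$, factorization through $B$ amounts to checking $i^\ast(\up(0,1+\varepsilon))=\interval$ for each rational $\varepsilon>0$, and that is exactly what the pointwise verification on the generic point establishes.
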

\begin{proof}
If $\varepsilon>0$ then we can find $r\in D$ with $(r,\varepsilon/2)\in x$.
Then $(0,1+\varepsilon)\supset(r,\varepsilon/2)$ and so is in $x$.
\end{proof}

We define $m'\colon \interval\times B\rightarrow B$ as follows.
Let $x$ and $F$ be in $\Ball(D)$ with $x$ Cauchy and $F\supseteq\bot$.
We define%
\[
m'(x,F)= \mathord{\supset}\{(m(q,r),m(\delta,\varepsilon))\mid(q,\delta)\in
x,(r,\varepsilon)\in F\}
\]
(i.e. the set of all formal balls refined by one in the set on the right).
The fact that it is a filter follows from the fact that if $(q,\delta)\supset(q',\delta')$ in $x$
and $(r,\varepsilon)\supset(r',\varepsilon')$ in $F$ then%
\[
  (m(q,r),m(\delta,\varepsilon))\supset(m(q',r'),m(\delta',\varepsilon'))
  \text{.}
\]
This is because
\[
  \left|  \frac{q+r}{2}-\frac{q'+r'}{2}\right|
    +\frac{\delta'+\varepsilon'}{2}
  \leq\frac{1}{2}\left(  |q-q'|+\delta'+|r-r'|+\varepsilon'\right)
  \leq \frac{\delta+\varepsilon}{2}
  \text{.}%
\]

To see that it is bigger than $\bot$, suppose $\varepsilon>0$.
Since $x$ is Cauchy, there is some $(q,\delta)\in x$ with $\delta<\varepsilon/2$;
also, $(0,1+\varepsilon/2)\in F$ and so
$(q/2,\frac{1}{2}+\frac{\delta}{2} +\frac{\varepsilon}{4})\in m'(x,F)$.
From $|q|\leq1$ it follows that
$(0,1+\varepsilon)\supset(q/2,\frac{1}{2}+\frac{\delta}{2}+\frac{\varepsilon }{4})$
and so $(0,1+\varepsilon)\in m'(x,F)$.

\begin{lemma}
  \begin{enumerate}
  \item
    $m=m'\circ(\interval\times i)$.
  \item
    $r\circ m'(x,F)=r(F)/2$.
  \end{enumerate}
\end{lemma}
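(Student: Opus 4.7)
The plan is to verify both clauses by direct calculation at the level of formal balls, reading the stated equations as equalities of filters in $\Ball(D)$ (respectively of upper reals), each being a quick consequence of the Cauchy condition on $x$ (and, for part~(1), on $y$).

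For~(1), read the equation as $i\circ m = m'\circ(\interval\times i)$; we must show, for $x,y\in\interval$, the filter equality $m'(x,i(y)) = i(m(x,y))$ in $\Ball(D)$. One inclusion is the easy direction: if $(s,\eta)\supset((q+r)/2,(\delta+\varepsilon)/2)$ with $(q,\delta)\in x$ and $(r,\varepsilon)\in y$, then $|q-x|<\delta$ and $|r-y|<\varepsilon$, and two applications of the triangle inequality yield $|s-(x+y)/2|<\eta$, so $(s,\eta)\in i(m(x,y))$. For the reverse, given $(s,\eta)\in i(m(x,y))$ set $\theta = \eta - |s-(x+y)/2| > 0$, and use the Cauchy property to pick $(q,\delta)\in x$ and $(r,\varepsilon)\in y$ with $\delta+\varepsilon<\theta$; the same triangle-inequality computation already used to show $m'(x,F)$ is a filter then gives $(s,\eta)\supset((q+r)/2,(\delta+\varepsilon)/2)$, placing it in $m'(x,i(y))$.

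For~(2), I would work with the upper-real description, a positive rational $t$ being ``in'' $r(F)$ iff some $(r,\varepsilon)\in F$ has $\varepsilon<t$. Unfolding $m'$, such a $t$ lies in $r(m'(x,F))$ iff there exist $(q,\delta)\in x$ and $(r,\varepsilon)\in F$ with $(\delta+\varepsilon)/2 < t$, while $t$ lies in $r(F)/2$ iff some $(r,\varepsilon)\in F$ has $\varepsilon<2t$. The $\subseteq$ direction is immediate from $\varepsilon\leq\delta+\varepsilon$; for $\supseteq$, given $(r,\varepsilon)\in F$ with $\varepsilon<2t$, the Cauchy condition on $x$ supplies $(q,\delta)\in x$ with $\delta<2t-\varepsilon$, whence $(\delta+\varepsilon)/2<t$.

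I do not anticipate a genuine obstacle. Both parts reduce to unwinding the definitions of $i$, $m'$ and $r$, together with the triangle inequality and the Cauchy property. The only point requiring a little care is to phrase everything geometrically so that it applies at the generic point; the arguments above, being purely about containments of formal balls and memberships of positive rationals in rounded up-sets, are already of that form.
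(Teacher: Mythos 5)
Correct, and the approach is the only natural one. The paper's own proof reads simply ``Both are clear'' and leaves the computation implicit; your detailed verification -- unwinding the definitions of $i$, $m'$ and $r$ at the level of formal balls and upper reals, and invoking the triangle inequality together with the Cauchy condition -- is exactly the calculation the author leaves to the reader.
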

\begin{proof}
  Both are clear.
\end{proof}

\begin{theorem}\label{thm:IIterativeMPA}
  The midpoint algebra $\interval$ is iterative.
\end{theorem}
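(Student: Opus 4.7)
The plan is to realise $M\colon X\to\interval$ as the least fixpoint of a Scott-continuous endofunction on $\Loc(X,B)$, and then to exploit the radius identity $r\circ m'(x,F)=r(F)/2$ to force the fixpoint to be Cauchy, hence to factor through $\interval\hookrightarrow B$. This mirrors the argument sketched for $\Delta^n$ in the commented passage above.

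First I would observe that $B$, being the up-closure of a point in the continuous dcpo $\Ball(D)$, inherits a continuous-dcpo structure with bottom element $\bot$, so that $\Loc(X,B)$ is a dcpo with bottom (the constant map at $\bot$). I would then define
\[
  T\colon \Loc(X,B)\to\Loc(X,B), \qquad T(f) = m'\circ(\interval\times f)\circ\langle h,t\rangle,
\]
and check Scott-continuity of $T$; this reduces to the Scott-continuity of $m'$ together with functoriality of pairing and product. Taking $M_0=\bot$ and $M_{n+1}=T(M_n)$, one obtains the least fixpoint $M=\bigdsqcup_n M_n$ of $T$.

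To show that $M$ lands inside $\interval$, I would apply the radius map $r$: from $T(M)=M$ and $r\circ m'(x,F)=r(F)/2$ one gets $r\circ M = \tfrac12\,(r\circ M)$ as an upper real. Since $r\circ M\leq r\circ\bot =1$, iterating the halving identity yields $r\circ M\leq 2^{-n}$ for every $n$, so $r\circ M = 0$. Thus $M$ takes values in the Cauchy part of $B$, namely the image of $\interval$, and the required iteration equation $M=m\circ\langle h,M\circ t\rangle$ follows at once from $T(M)=M$ and the compatibility $m = m'\circ(\interval\times i)$.

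The delicate part is uniqueness. Given any other $M'\colon X\to\interval$ satisfying the iteration equation, post-composition with $i$ exhibits $M'$ as a fixpoint of $T$ in $\Loc(X,B)$, so the least-fixpoint property gives $M\sqsubseteq M'$ in the specialization order. Because $\interval$ is Hausdorff, the specialization order on $\Loc(X,\interval)$ is discrete, forcing $M=M'$. The main obstacle I anticipate is making the dcpo structure on $\Loc(X,B)$ rigorous enough to support the least-fixpoint construction and the radius argument simultaneously; this should be handled uniformly by the continuous-dcpo presentation of $B$ inherited from $\Ball(D)$, together with Scott-continuity of the various locale maps involved.
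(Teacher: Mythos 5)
Your proposal follows the paper's own proof essentially step for step: the same Scott-continuous endofunction $T(f)=m'\circ(\interval\times f)\circ\langle h,t\rangle$ on the pointed dcpo $\Loc(X,B)$, the same least-fixpoint construction, the same use of the radius identity to force $r\circ M=0$ and hence factorization through $\interval$, and the same uniqueness argument via least-fixpoint order and discreteness of the specialization order on $\interval$. The only (welcome) addition is that you spell out why $r\circ M=\tfrac12(r\circ M)$ forces $r\circ M=0$, namely by the a priori bound $r\circ M\leq r\circ\bot=1$ and iterating the halving, a step the paper leaves implicit.
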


\begin{proof}
Let $X$ be a locale and $h\colon X\rightarrow\interval$, $t\colon X\rightarrow X$ be two
maps.
We require a unique morphism $M\colon X\rightarrow\interval$ making the following diagram commute.%
\[%
  \xymatrix{
    {\interval\times X}
      \ar@{->}[r]^{\interval\times M}
    & {\interval\times\interval}
      \ar@{->}[d]^{m}
    \\
    {X}
      \ar@{->}[r]_{M}
      \ar@{->}[u]^{\langle h,t\rangle}
    & {\interval}
  }
\]

$\Loc(X,B)$ is a dcpo with bottom.
We define a Scott continuous endofunction $T$ on it
by $T(f)=m'\circ(\interval\times f)\circ\langle h,t\rangle$:%
\[%
  \xymatrix{
    {\interval\times X}
      \ar@{->}[r]^{\interval\times f}
    & {\interval\times B}
      \ar@{->}[d]^{m'}
    \\
    {X}
      \ar@{->}[r]_{T(f)}
      \ar@{->}[u]^{\langle h,t\rangle}
    & {B}
  }
\]

Let $M$ be its least fixpoint, $\bigdsqcup_{n} M_{n}$ where $M_{0}$
is constant $\bot$ and $M_{n+1}=T(M_{n})$. Then $r\circ M=\frac{1}{2}(r\circ
M)$, from which it follows that $r\circ M=0$ and $M$ factors via $\interval$
thus giving us existence of the required $M$.

For uniqueness, suppose $M'$ is another such. Then $M\sqsubseteq
M'$ since $M$ is least fixpoint, but the specialization order on
$\interval$ is discrete.
\end{proof}

We can calculate the inverse image function for $M$ in the above theorem more
explicitly, at least for the subbasic opens $(p,\alpha)$. First of all,%
\[
  M_{0}^{\ast}(p,\alpha)=
    \left\{
      \begin{array}[c]{ll}%
        \top & \text{if }(p,\alpha)\supset(0,1)\\
        \bot & \text{otherwise}%
      \end{array}
    \right.
\]
(and note that the condition is decidable). Next,%
\[
T(f)^{\ast}(p,\alpha)=\bigvee\{h^{\ast}(q,\delta)\wedge t^{\ast}f^{\ast
}(r,\varepsilon)\mid(p,\alpha)\supset(\frac{q+r}{2},\frac{\delta+\varepsilon
}{2})\}\text{.}%
\]
This allows us to calculate $M^{\ast}(p,\alpha)=\bigdvee_{n} M_{n}^{\ast}(p,\alpha)$.

\section{The map $c\colon 2^{\omega}\rightarrow\interval$}\label{sec:c}

Thinking of the signs in a point of Cantor space $2^{\omega}$ as standing for
$1$ or $-1$, such an infinite sequence can be viewed as a binary expansion,
thus giving a map to $\interval$.

\begin{definition}
We define a map $c\colon 2^{\omega}\rightarrow\interval$ as $M_{-1,+1}$.
It is characterized by the equation%
\[
c(\pm s)=\frac{1}{2}\left(  \pm1+c(s)\right)  \text{.}%
\]
\end{definition}

From the characterizing equation we see that, in more traditional form,%
\begin{equation}\label{eq:c}
  c((s_{i})_{i=1}^{\infty})=\sum_{i=1}^{\infty}\frac{s_{i}}{2^{i}}\text{.}%
\end{equation}

\begin{definition}
$2^{\ast}$ is the discrete space of finite sequences of signs. We define
$c'\colon 2^{\ast}\rightarrow\interval$ by the formula~\eqref{eq:c},
adapted for finite sequences. Thus we think of the finite sequence $s$ as the
infinite sequence $s0^{\omega}$ (which is not in $2^{\omega}$, of course).
\end{definition}

$c'$ is an isomorphism between $2^{\ast}$ and $D$.

If $s$ is finite of length $n$ and $t$ is infinite, then we see from the
definition that $c(st)=c'(s)+2^{-n}c(t)$.

We now show how to calculate the inverse image function $c^{\ast}$,
using Theorem~\ref{thm:IIterativeMPA} and the remarks following it.
Our map $h\colon 2^{\omega}\rightarrow\interval$ is $h(\pm s)= \pm 1$.
It has%
\[
  h^{\ast}(p,\alpha)=\left\{
    \begin{array}[c]{ll}%
      \up \psign & \text{if }p-\alpha<1<p+\alpha\\
      \emptyset & \text{otherwise}%
    \end{array}
    \right\}  \vee \left\{
      \begin{array}[c]{ll}%
        \up\msign & \text{if }p-\alpha<-1<p+\alpha\\
        \emptyset & \text{otherwise}%
      \end{array}
    \right\}
    \text{.}%
\]
Hence, for $f\colon 2^{\omega}\rightarrow\interval$,%
\begin{align*}
T(f)^{\ast}(p,\alpha)  &  =\bigvee\{(\up \psign)\wedge t^{\ast}f^{\ast
}(r,\varepsilon)\mid(p,\alpha)\supset(\frac{q+r}{2},\frac{\delta+\varepsilon
}{2}),q-\delta<1<q+\delta\}\\
&  \vee\bigvee\{(\up \msign)\wedge t^{\ast}f^{\ast}(r,\varepsilon)\mid
(p,\alpha)\supset(\frac{q+r}{2},\frac{\delta+\varepsilon}{2}),q-\delta
<-1<q+\delta\}\text{.}%
\end{align*}

\begin{lemma}
  In $\Omega\mathbb{R}$ we have%
  \begin{align*}
    \bigvee\{(r,\varepsilon) \mid(p,\alpha)\supset(\frac{q+r}{2},\frac{\delta+\varepsilon}{2}),
        q-\delta<-1<q+\delta\} & =(2p+1,2\alpha)\text{,}\\
    \bigvee\{(r,\varepsilon) \mid(p,\alpha)\supset(\frac{q+r}{2},\frac{\delta+\varepsilon}{2}),
        q-\delta<1<q+\delta\} & =(2p-1,2\alpha)  \text{.}
  \end{align*}
\end{lemma}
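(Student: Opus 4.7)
The plan is to handle both identities at once by noting that they are related by the symmetry $\pm1\mapsto\mp1$ (equivalently $q\mapsto -q$, $p\mapsto -p$, $r\mapsto -r$), so it suffices to prove the first. Both sides are opens of $\Omega\mathbb{R}$, so I will verify the equality by checking the two inclusions $\leq$ and $\geq$. Throughout I use the unpacking $(p,\alpha)\supset(x,\beta)\iff |x-p|+\beta<\alpha$, and the fact that any open $U$ of $\mathbb{R}$ is the join of the basic rational opens $(r,\varepsilon)$ strictly refining it.

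For the $\leq$ direction, take any pair $(q,\delta)$, $(r,\varepsilon)$ from the index set, so $|q+1|<\delta$ and $|q+r-2p|+\delta+\varepsilon<2\alpha$. The triangle inequality gives $|r-(2p+1)|=|(q+r-2p)-(q+1)|\leq |q+r-2p|+|q+1|<|q+r-2p|+\delta$, hence $|r-(2p+1)|+\varepsilon<|q+r-2p|+\delta+\varepsilon<2\alpha$, which is the refinement $(2p+1,2\alpha)\supset(r,\varepsilon)$. So each summand is $\leq(2p+1,2\alpha)$. For the $\geq$ direction, it suffices to show that every basic $(r,\varepsilon)$ with $|r-(2p+1)|+\varepsilon<2\alpha$ appears in the join. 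Given the strict inequality there is a positive rational $\eta$ with $|r-(2p+1)|+\varepsilon+2\eta<2\alpha$; by density of the dyadics one can pick $q\in D$ with $-1<q<-1+\eta$ (and such $q$ certainly lies in $(-1,1)$ when $\eta<2$), and set $\delta=\eta$. Then $|q+1|=q+1<\eta=\delta$, so $-1\in(q,\delta)$, and $|q+r-2p|+\delta+\varepsilon\leq |q+1|+|r-(2p+1)|+\eta+\varepsilon<2\eta+|r-(2p+1)|+\varepsilon<2\alpha$, so the refinement condition also holds.

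The only real obstacle is the reverse direction, where one must exhibit a valid $(q,\delta)$ witnessing $-1\in(q,\delta)$ subject to $q\in D$, i.e.\ a \emph{dyadic} rational strictly inside $(-1,1)$. The slack $2\alpha-|r-(2p+1)|-\varepsilon$ obtained from the strict refinement of $(2p+1,2\alpha)$ provides exactly the room needed to push $q$ above $-1$ while keeping $\delta$ strictly positive; density of the dyadics above $-1$ does the rest. The second identity then follows by the symmetry noted at the outset.
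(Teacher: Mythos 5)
Your proof is correct and follows essentially the same approach as the paper: you show that the balls $(r,\varepsilon)$ indexing the left-hand join are exactly the strict refinements of $(2p+1,2\alpha)$ (triangle inequality in one direction, a small slack $\eta$ --- the paper's $\beta$ --- plus density in the other) and then appeal to the fact that a formal ball is the directed join of its strict refinements. The only (harmless) over-restriction is requiring $q\in D$; the lemma is stated for $\Omega\mathbb{R}$, where any rational $q$ with $-1<q<-1+\eta$ would do, and the paper addresses the passage to $\Omega\interval$, where dyadic centres and endpoint effects matter, only in the discussion following the lemma.
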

\begin{proof}
  We prove only the first, since the second follows by symmetry.
  We have
  \begin{align*}
    (r,\varepsilon) \subset (2p+1,2\alpha)
      & \Leftrightarrow \left( \frac{-1+r}{2}, \frac{\varepsilon}{2}\right) \subset (p,\alpha) \\
      & \Leftrightarrow \exists\beta>0
          \left( \frac{-1+r}{2}, \beta+\frac{\varepsilon}{2}\right) \subset (p,\alpha)
  \end{align*}
  Then the final condition is equivalent to the existence of $q,\delta$,
  with $-1<q<-1+\delta$ and
  \[
    \left(\frac{q+r}{2},\frac{\delta+\varepsilon}{2}\right) \subset (p, \alpha)
    \text{.}
  \]
  (Note that the second condition is equivalent to this with $q=-1,\delta=0$,
  and the $\beta$ enables us to fatten $-1$ out to a positive ball.)
  Each $\left(\frac{q+r}{2},\frac{\delta+\varepsilon}{2}\right)$
  can be refined to a $\left( \frac{-1+r}{2}, \beta+\frac{\varepsilon}{2}\right)$
  and vice versa.
\end{proof}

In $\Omega\interval$ the same equations hold,
but we must be careful how we interpret the right-hand side.
Consider the first equation.
If $p<0$ then the centre $2p+1$ of the ball on the right is still in $D$.
The ball is approximated from below by refinements with the same centre,
and it follows in the proof that we can restrict the balls appearing in the left-hand side
to those with centre in $D$.

Now suppose $0\leq p$, so that $1\leq 2p+1$.
Then the ball $(2p+1,2\alpha)$ is equivalent in $\Omega\interval$ to the interval
$(2p+1-2\alpha,1]$.
This interval may take various forms depending on the value of $2p+1-2\alpha$
-- which, in particular, may be less than $-1$ or greater than $1$.
However, in every case it is approximated by balls refining $(2p+1,2\alpha)$
and with centre in $D$.
Therefore the equations in the lemma will still hold in $\Omega\interval$.

Taking care with interpretations in $\Omega\interval$ in that way, it follows that%
\[
  T(f)^{\ast}(p,\alpha)
    =(\up \psign)\wedge t^{\ast}f^{\ast}(2p-1,2\alpha)
      \vee(\up \msign)\wedge t^{\ast}f^{\ast}(2p+1,2\alpha)\text{.}%
\]
Although our proof of iterativity used the metric space structure and the opens balls,
we shall be actually be more interested in the behaviour of the half-open intervals.
In the rest of the section we shall calculate formulae for opens such as
$c^\ast((c'(s),1])$.
First, rewriting $p-\alpha$ as $p$, we see, for all $p$, that
\begin{equation}\label{eq:TfHalfOpens}
  T(f)^{\ast}(p,1]
    =(\up \psign)\wedge t^{\ast}f^{\ast}(2p-1,1]
      \vee(\up \msign)\wedge t^{\ast}f^{\ast}(2p+1,1]
  \text{.}%
\end{equation}

Now if $p=c'(s)\in D$, we have%
\begin{align*}
  (2p-1,1]  &  =\left\{
    \begin{array}[c]{ll}%
      (c'(s'),1] & \text{if }s=\psign s'\\
      (-1,1]=\bigdvee_{k}(c'(\msign^{k}),1] & \text{if }s=\varepsilon\\
      \interval & \text{if }s=\msign s'%
    \end{array}
  \right. \\
  (2p+1,1]  &  =\left\{
    \begin{array}[c]{ll}%
      \emptyset & \text{if }s=\psign s'\text{ or }s=\varepsilon\\
      (c'(s'),1] & \text{if }s=\msign s'%
    \end{array}
  \right.
\end{align*}

Using this we can calculate $c^{\ast}(c'(s),1]$ by induction on the
length of $s$, the base case requiring knowledge of $c^{\ast}(-1,1]$.

\begin{lemma}\label{lem:cStar}
  \begin{enumerate}
  \item
    $c^{\ast}(c'(\msign^{k}),1]
      =\bigvee_{i=0}^{k-1}\up(\msign^{i}\psign)\vee((\up\msign^{k})\wedge (t^\ast)^k c^{\ast}((0,1]))$.
  \item
    $c^{\ast}(-1,1]=\bigvee_{i=0}^{\infty}\up(\msign^{i}\psign)$.
  \item
    $c^{\ast}(0,1]=\bigvee_{i=0}^{\infty}\up(\psign\msign^{i}\psign)$.
  \end{enumerate}
\end{lemma}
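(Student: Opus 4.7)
The plan is to prove part~(1) by induction on $k$, using the recurrence for $c^\ast$ coming from the fixed-point equation $c = T(c)$, and then to deduce parts~(2) and~(3) by taking a directed join and re-applying equation~\eqref{eq:TfHalfOpens} at $p=0$ to absorb a residual term.

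For part~(1), the base case $k=0$ collapses both sides to $c^\ast(0,1]$, using $\up\varepsilon = \top$ and the convention that an empty indexed join is $\emptyset$. For the inductive step, take $p = c'(\msign^{k+1}) = -1 + 2^{-(k+1)}$. Then $2p-1 < -1$, so $(2p-1,1] = \interval$ in $\Omega\interval$, while $2p+1 = c'(\msign^{k})$ exactly. Substituting into equation~\eqref{eq:TfHalfOpens} yields
\[
  c^\ast(c'(\msign^{k+1}),1] = \up\psign \vee (\up\msign)\wedge t^\ast c^\ast(c'(\msign^{k}),1]\text{.}
\]
Applying the inductive hypothesis inside $t^\ast(\cdots)$, distributing, and using the identity $(\up\msign)\wedge t^\ast\up r = \up(\msign r)$ (which follows from $t^\ast\up r = \up(\msign r)\vee\up(\psign r)$), each $\up(\msign^{i}\psign)$ shifts to $\up(\msign^{i+1}\psign)$ and the residual becomes $(\up\msign^{k+1})\wedge(t^\ast)^{k+1}c^\ast(0,1]$. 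Adjoining $\up\psign = \up(\msign^{0}\psign)$ and re-indexing delivers the formula for $k+1$.

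For part~(2), I use that $(-1,1] = \bigdvee_{k}(c'(\msign^{k}),1]$ in $\Omega\interval$ and that $c^\ast$ preserves directed joins. Part~(1) then gives
\[
  c^\ast(-1,1] = A \vee R
\]
where $A = \bigvee_{i=0}^{\infty}\up(\msign^{i}\psign)$ and $R = \bigdvee_{k}\bigl((\up\msign^{k})\wedge(t^\ast)^{k}c^\ast(0,1]\bigr)$. The main obstacle is to dispose of $R$ without already having a closed form for $c^\ast(0,1]$, since any naive attack is circular. The resolution is to apply equation~\eqref{eq:TfHalfOpens} at $p=0$: because $(2\cdot 0+1,1] = \emptyset$, it simplifies to $c^\ast(0,1] = (\up\psign)\wedge t^\ast c^\ast(-1,1]$. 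Substituting this into $R$ and using $(\up\msign^{k})\wedge(t^\ast)^{k}(\up\psign) = \up(\msign^{k}\psign)$, each term rewrites as $\up(\msign^{k}\psign)\wedge(t^\ast)^{k+1}c^\ast(-1,1] \leq \up(\msign^{k}\psign) \leq A$. Hence $R \leq A$, and part~(2) follows.

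Part~(3) is then immediate: substituting part~(2) into $c^\ast(0,1] = (\up\psign)\wedge t^\ast c^\ast(-1,1]$ and using $t^\ast\up(\msign^{i}\psign) = \up(\msign^{i+1}\psign)\vee\up(\psign\msign^{i}\psign)$ together with $(\up\psign)\wedge\up(\msign^{i+1}\psign) = \emptyset$ leaves exactly $\bigvee_{i}\up(\psign\msign^{i}\psign)$.
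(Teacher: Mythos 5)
Your proposal is correct and follows essentially the same route as the paper: induction on $k$ via equation~\eqref{eq:TfHalfOpens} for part~(1), then applying that equation at $p=0$ to rewrite $c^\ast(0,1]$ in terms of $c^\ast(-1,1]$ so that the residual term is absorbed (the paper sandwiches the general term between $\bigvee_{i=0}^{k}\up(\msign^{i}\psign)$ and the next term of the directed family, while you show the residual $R$ is bounded by $A$ directly — the same computation, differently packaged), and deducing part~(3) immediately.
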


\begin{proof}
  (1) is by induction on $k$. The base case, $k=0$, is clear.
  \begin{align*}
    c^{\ast}(c'(\msign^{k+1}),1]
      & = (\up \psign)\wedge t^{\ast}c^{\ast}(\interval)
          \vee(\up \msign)\wedge t^{\ast}c^{\ast}(c'(\msign^k,1])
                                 \quad \text{(equation~\eqref{eq:TfHalfOpens})} \\
      & = (\up \psign) \vee (\up \msign)\wedge t^{\ast}\left(
        \bigvee_{i=0}^{k-1}\up(\msign^{i}\psign)\vee((\up\msign^{k})\wedge (t^\ast)^k c^{\ast}((0,1]))
        \right) \\
    &  =\bigvee_{i=0}^{k}\up(\msign^{i}\psign)\vee((\up\msign^{k+1})\wedge (t^\ast)^{k+1} c^{\ast}((0,1]))
  \end{align*}

  (2) Using part~(1), and applying equation~\eqref{eq:TfHalfOpens} to $c^\ast(0,1]$, we see that
  \begin{align*}
    c^{\ast}(c'(\msign^{k}),1]
      &  =\bigvee_{i=0}^{k-1}\up(\msign^{i}\psign)
          \vee((\up\msign^{k})\wedge (t^\ast)^k((\up\psign)\wedge t^{\ast}c^{\ast}((-1,1])))\\
      &  =\bigvee_{i=0}^{k-1}\up(\msign^{i}\psign)
          \vee((\up\msign^{k}\psign)\wedge (t^\ast)^{k+1}c^{\ast}((-1,1]))\\
      &  \leq\bigvee_{i=0}^{k}\up(\msign^{i}\psign)\leq c^{\ast}(c'(\msign^{k+1}),1]\text{.}%
  \end{align*}
  It follows that%
  \[
    c^{\ast}(-1,1]
      =c^{\ast}\left(  \bigdvee_{k}(c'(\msign^{k}),1]\right)
      =\bigdvee_{k}\bigvee_{i=0}^{k}\up(\msign^{i}\psign)
      =\bigvee_{i=0}^{\infty}\up(\msign^{i}\psign)\text{.}%
  \]

  (3) Apply equation~\eqref{eq:TfHalfOpens} with $p=0$, and then use part~(2).
\end{proof}

In other words, $c(u)>-1$ iff $u$ has a $\psign$ somewhere; and $c(u)>0$ iff $u$
starts with a $\psign$ and has at least one more.

\begin{proposition}\label{prop:cStar}
  If $s\in2^{\ast}$ then
  \begin{enumerate}
  \item
    $c^{\ast}((c'(s),1])=\bigdvee_{k}\rup(s\psign\msign^{k}\psign)$, and
  \item
    $c^{\ast}([-1,c'(s)))=\bigdvee_{k}\lup (s\msign\psign^{k}\msign)$.
  \end{enumerate}
\end{proposition}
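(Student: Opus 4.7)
The plan is to prove (1) by induction on $|s|$, then obtain (2) from (1) via a sign-swap symmetry rather than running a symmetric induction. Two recurrences drive the induction: equation~\eqref{eq:TfHalfOpens} on the analytic side, and on the combinatorial side the splitting
\[
  \rup(\psign r) = \up\psign\wedge t^\ast(\rup r),\qquad
  \rup(\msign r) = \up\psign \vee \bigl(\up\msign\wedge t^\ast(\rup r)\bigr),
\]
obtained by considering the first symbol of a stream in $\rup(\pm r)$ and using the definition of $\lexl$.

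For the base case $s=\varepsilon$, inspection of which sequences qualify as right bristles of $\psign\msign^k\psign$ gives $\rup(\psign\msign^k\psign)=\bigvee_{j=0}^{k}\up(\psign\msign^j\psign)$; taking the directed join over $k$ and comparing with Lemma~\ref{lem:cStar}(3) verifies (1) at $s=\varepsilon$. For the inductive step I would evaluate \eqref{eq:TfHalfOpens} at $p=c'(\pm s)=\tfrac12(\pm 1+c'(s))$. When $\pm=\psign$, the centre $2p+1$ exceeds $1$ so that disjunct vanishes in $\Omega\interval$, leaving $c^\ast(c'(\psign s),1]=\up\psign\wedge t^\ast c^\ast(c'(s),1]$. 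When $\pm=\msign$, the centre $2p-1$ falls below $-1$ so $c^\ast(2p-1,1]=\interval$, yielding $c^\ast(c'(\msign s),1]=\up\psign\vee(\up\msign\wedge t^\ast c^\ast(c'(s),1])$. Applying the induction hypothesis and the $\rup$-recurrence with $r=s\psign\msign^k\psign$, and using that directed joins commute with $t^\ast$ and with meets against a single fixed open, the two sides match term by term.

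For (2), let $\sigma\colon 2^\omega\to 2^\omega$ swap signs pointwise and $\nu\colon\interval\to\interval$ negate. From~\eqref{eq:c} we have $\nu\circ c=c\circ\sigma$, hence $c^\ast\nu^\ast=\sigma^\ast c^\ast$. A short check from the lexicographic characterization in Definition~\ref{def:leftRightHook} gives $\sigma^\ast\rup u=\lup\bar u$, where $\bar u$ denotes sign-flipping, while $\nu^\ast([-1,p))=(-p,1]$ and $-c'(s)=c'(\bar s)$. Applying $\sigma^\ast$ to part (1) instantiated at $\bar s$ then delivers (2).

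The one spot that warrants care is the interpretation of the half-open ball $(2p\pm 1,2\alpha)$ in $\Omega\interval$ when its centre exits $(-1,1)$. This is exactly the point already handled in the discussion immediately preceding equation~\eqref{eq:TfHalfOpens}, so no further analysis is required; the remainder is clean bookkeeping, with the main genuine content being the combinatorial splitting recurrence for $\rup$ that allows the induction to close.
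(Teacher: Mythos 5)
Your proof is correct and is essentially the paper's own argument: induction on $|s|$ using equation~\eqref{eq:TfHalfOpens}, with the base case closed by Lemma~\ref{lem:cStar}(3) and the identity $\rup(\psign\msign^k\psign)=\bigvee_{j=0}^k\up(\psign\msign^j\psign)$, and the inductive step closed by the first-symbol splitting of $\rup$ (which the paper uses silently). Your sign-swap/negation argument for (2) is precisely what the paper means by ``the second is dual.''
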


\begin{proof}
We prove only the first assertion, since the second is dual. We use induction
on the length of $s$.

For $s=\varepsilon$, we use Lemma~\ref{lem:cStar}~(3) together with
$\rup(\psign\msign^{k}\psign)=\bigvee_{i=0}^{k}\up(\psign\msign^{i}\psign)$. Now we can use
the previous calculations and see%
\begin{align*}
  c^{\ast}((c'(\psign s),1])
    &  =(\up\psign)\wedge t^{\ast}c^{\ast}((c'(s),1])\\
    &  =(\up\psign)\wedge t^{\ast}\left(  \bigdvee_{k}\rup(s\psign\msign^{k}\psign)\right) \\
    &  =\bigdvee_{k}\rup(\psign s\psign\msign^{k}\psign)\\
  c^{\ast}((c'(\msign s),1])
    &  =(\up\psign)\wedge t^{\ast}2^\omega \vee (\up\msign)\wedge t^{\ast}c^{\ast}((c'(s),1])\\
    &  =(\up\psign)\vee(\up\msign)\wedge t^{\ast}
        \left(  \bigdvee_{k}\rup(s\psign\msign^{k}\psign)\right) \\
    &  =\bigdvee_{k}\rup(\msign s\psign\msign^{k}\psign)\text{.}%
\end{align*}
\end{proof}

\section{$c$ is a proper surjection}\label{sec:ProperSurjn}

We shall show that $c$ is a proper surjection in the sense of Vermeulen~\cite{Vermeulen:ProperML}:
the right adjoint $\forall_{c}\colon \Omega2^{\omega}\rightarrow\Omega\interval$
of $c^{\ast}$ preserves directed joins and satisfies a Frobenius condition.
$\forall_c$ is thus a preframe homomorphism.
We first use the preframe coverage theorem to present $\Omega2^{\omega}$ as a preframe,
and define $\forall_{c}$ by its action on a preframe base,
and then we show that this function is right adjoint to $c^\ast$ and has the Frobenius condition.

Any open of $2^{\omega}$ is a directed join of finite joins of basic opens
$\up s= \rup s\wedge\lup s$, hence a directed join
of finite meets of finite joins of opens of the form $\rup s$ and
$\lup s$. But since $\lexl$ and $\lexu$ are total orders,
by Lemma~\ref{lem:leftRightHook} we get a preframe base from opens of the
form $\rup s$, $\lup s$ or $\rup
s\vee \lup t$. Our strategy now is to calculate $\forall_{c}$ for
these and to rely on preservation of finite limits and directed joins to get
the rest.

\begin{definition}
The distributive lattice $S_{\rup}$ is defined as $2^{\ast}%
\cup\{\bot\}$, with $2^{\ast}$ ordered by the reverse of $\lexl$ and
$\bot$ an adjoined bottom. Since it is totally ordered it has binary meets and
joins, and also top $\varepsilon$ and bottom $\bot$.

Similarly we define $S_{\lup}=2^{\ast}\cup\{\bot\}$, with $2^{\ast}$
ordered by $\lexu$.

We write $S$ for $S_{\lup}\times S_{\rup}$.
\end{definition}

\begin{lemma}\label{lem:CantorPresn3}
\begin{align*}
\Omega2^{\omega}\cong\Fr\langle S & \text{ (qua }\vee \text{-semilattice)} \mid\\
  (s,t) & \leq (s,t\msign) \text{ (}t\in2^{\ast}\text{)}\\
  (s,t) & \leq (s\psign,t) \text{ (}s\in2^{\ast}\text{)}\\
  \true & \leq (s,\varepsilon)\text{ (}s\in S_{\lup}\text{)}\\
  \true & \leq (\varepsilon,s)\text{ (}s\in S_{\rup}\text{)}\\
  \true & \leq (s,t)\text{ (}s,t\in2^{\ast},t\lexu s\text{ or }t\lexl s\text{)}\\
(u,s)\wedge(t,v)  &  \leq (u,v)\text{ (if }t<s\text{ in }2^{\ast}\text{ and }(u,v)\leq(t,s)\text{)}\\
(u,s)\wedge(s,v)  &  \leq (s\msign,s\psign)\text{ (if }s\in2^{\ast}
    \text{ and }(u,v)\leq(s\msign,s\psign)\text{)}\rangle
\end{align*}
and%
\[
\Omega2^{\omega}\cong\PreFr\langle S\text{ (qua poset)}%
\mid\text{... same relations as above ...}\rangle
\]
\end{lemma}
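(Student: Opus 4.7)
The plan is to establish both presentations by reducing them to the frame presentation already proved in Lemma~\ref{lem:CantorPresn2}. I interpret a generator $(s,t)\in S$ as the open $\lup s\vee\rup t$ of $2^{\omega}$ (with the convention $\lup\bot=\rup\bot=\emptyset$). Since $\lexu$ and $\lexl$ are total, joins in $S_{\lup}$ and $S_{\rup}$ are just maxima in the respective orders, and by Lemma~\ref{lem:leftRightHook}(2) both $\lup\colon S_{\lup}\to\Omega 2^{\omega}$ and $\rup\colon S_{\rup}\to\Omega 2^{\omega}$ are monotone; hence $(s,t)\mapsto\lup s\vee\rup t$ is a well-defined $\vee$-semilattice homomorphism from $S$ to $\Omega 2^{\omega}$.

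First I would discharge each listed relation under this interpretation. The first two reduce immediately to $\rup(t\msign)=\rup t$ and $\lup(s\psign)=\lup s$ (Lemma~\ref{lem:leftRightHook}(3)). The $\true$-relations at $(s,\varepsilon)$ and $(\varepsilon,s)$ follow from $\rup\varepsilon=\lup\varepsilon=\true$. For $\true\leq(s,t)$ when $t\lexl s$ (resp.\ $t\lexu s$), Lemma~\ref{lem:leftRightHook}(2) gives $\rup s\leq\rup t$ (resp.\ $\lup t\leq\lup s$), so $\lup s\vee\rup t\geq\lup s\vee\rup s=\true$ (resp.\ $\geq\lup t\vee\rup t=\true$) by Lemma~\ref{lem:leftRightHook}(4). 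The two meet relations are handled by fully distributing $(\lup u\vee\rup s)\wedge(\lup t\vee\rup v)$ (resp.\ $(\lup u\vee\rup s)\wedge(\lup s\vee\rup v)$) into four cross terms; the term $\rup s\wedge\lup t$ vanishes when $t<s$ by Lemma~\ref{lem:leftRightHook}(5), while $\rup s\wedge\lup s\leq\rup(s\psign)\vee\lup(s\msign)$ is Lemma~\ref{lem:leftRightHook}(6); the remaining three terms are absorbed into the right-hand side using monotonicity of $\lup,\rup$ together with the side hypothesis $(u,v)\leq(t,s)$ or $(u,v)\leq(s\msign,s\psign)$. This yields a frame homomorphism from the presented frame to $\Omega 2^{\omega}$.

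Next I would construct the inverse using the presentation of Lemma~\ref{lem:CantorPresn2}: map $\rup s\mapsto(\bot,s)$ and $\lup s\mapsto(s,\bot)$ into the presented frame. Each relation of Lemma~\ref{lem:CantorPresn2} is a specialization of a new one; for instance, $\rup s\wedge\lup t\leq\false$ for $t<s$ is the penultimate new relation with $u=v=\bot$ (where the side condition $(\bot,\bot)\leq(t,s)$ is automatic), and $\rup s\wedge\lup s\leq\rup(s\psign)\vee\lup(s\msign)$ is the final new relation with $u=v=\bot$. Checking that the two composites are the identity on generators is then routine: $\rup s\mapsto(\bot,s)\mapsto\lup\bot\vee\rup s=\rup s$, and $(s,t)=(s,\bot)\vee(\bot,t)\mapsto\lup s\vee\rup t\mapsto(s,\bot)\vee(\bot,t)=(s,t)$, using that $\bot$ is bottom in both coordinates of the $\vee$-semilattice $S$.

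For the preframe presentation I would invoke the observation made before the lemma: opens of the form $\lup s$, $\rup t$, and $\lup s\vee\rup t$ form a preframe base of $\Omega 2^{\omega}$, which is precisely the image of $S$. Since $S_{\lup}$ and $S_{\rup}$ are totally ordered, every binary join in $S$ qua $\vee$-semilattice is again an element of $S$, so the $\vee$-semilattice structure contributes no content beyond the poset and the explicit relations. All the listed relations have the shape $a\leq b$, $a\wedge b\leq c$, or $\true\leq a$, with no nontrivial join on the right, so they make sense verbatim at the preframe level and the same isomorphism argument carries over. The main obstacle will be the careful term-by-term bookkeeping needed to discharge the two meet relations after distribution; a secondary pitfall is keeping the directions of $\lexu$, $\lexl$, and reverse-$\lexl$ straight on $S_{\lup}$ and $S_{\rup}$ when verifying monotonicity, and ensuring that the $\vee$-semilattice/poset distinction does not smuggle in extra joins when passing to the preframe version.
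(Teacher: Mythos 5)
The frame half of your argument — building mutually inverse homomorphisms between $\Omega 2^\omega$ (via the presentation of Lemma~\ref{lem:CantorPresn2}) and the presented frame, sending $(s,t)\mapsto\lup s\vee\rup t$ one way and $\rup s\mapsto(\bot,s)$, $\lup s\mapsto(s,\bot)$ the other — is essentially the paper's argument with the two directions set up in the opposite order, and your term-by-term discharge of the relations is fine (indeed the side conditions $(u,v)\leq(t,s)$ and $(u,v)\leq(s\msign,s\psign)$ play no role in verifying them in $\Omega 2^\omega$; the four cross terms are already absorbed without them).

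The preframe half, though, has a genuine gap. You conclude the preframe presentation by observing that the relations ``have no nontrivial join on the right'' and that ``the same isomorphism argument carries over,'' but this does not follow. The preframe presented on $S$ qua poset is a quotient of the free preframe on the poset $S$, whereas the frame presented on $S$ qua $\vee$-semilattice is a quotient of the free frame on that $\vee$-semilattice; these are different constructions, and imposing syntactically identical relations does not automatically identify the quotients. What bridges them is the preframe coverage theorem, and its hypothesis is that the relations be \emph{join-stable}: for each relation $a\leq b$ (or $a\wedge b\leq c$) and each generator $d$, the joined-up form $(a\vee d)\leq(b\vee d)$ (resp.\ $(a\vee d)\wedge(b\vee d)\leq c\vee d$) must also be derivable. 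Verifying this is the substantive content of the final step, and it is exactly why the side conditions $(u,v)\leq(t,s)$ and $(u,v)\leq(s\msign,s\psign)$ appear in the last two relations: join-stability naturally produces $(u,s\vee v)\wedge(t\vee u,v)\leq(u,v)$ for arbitrary $u,v$, but when $t\leq u$ or $s\leq v$ one conjunct on the left already equals $(u,v)$ and the inequality is automatic in any preframe, so only the restricted case $u\leq t$, $v\leq s$ needs to be imposed. Your proposal neither invokes the preframe coverage theorem nor explains the presence of those side conditions, and without the join-stability check the preframe claim is unjustified.
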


\begin{proof}
To map from the presentation of Lemma~\ref{lem:CantorPresn2} to this one we
map $\rup s$ and $\lup s$ to $(\bot,s)$ and $(s,\bot)$.
This respects all the relations and so gives a frame homomorphism.
For the inverse we map $(\bot,\bot)$ to $\false$;
$(\bot,s)$ and $(s,\bot)$ to $\rup s$ and $\lup s$;
and $(s,t)$ to $\lup s\vee \rup t$.
Again this respects the relations and so gives a
frame homomorphism. As can be tested on generators, the two composites are
both identities.

The final part is now an application of the preframe coverage theorem
\cite{PrePrePre}, once it is checked that the relations are all join-stable.
This is straightforward.
Note the role of the condition $(u,v)\leq(t,s)$ in the last relation but one
(and similarly in the last).
For all $u,v$ we have $(u,s\vee v)\wedge (t\vee u,v) \leq (u,v)$,
and this is the form that naturally arises from join-stability.
However, if $t\leq u$ or $s\leq v$ then one of the two conjuncts is $(u,v)$
and the relation holds automatically in the preframe presented.
For the relations given we only need to consider the case where
$u\leq t$ and $v\leq s$.
\end{proof}

Our strategy now is to calculate $\forall_{c}$ for the opens $(s,t)$ and to
rely on preservation of finite meets and directed joins to get the rest.
Using Definition~\ref{def:theta} we define a preframe homomorphism that we
subsequently show to be $\forall_{c}$.
Let us explain roughly how the definition arises.
(We don't need a rigorous definition yet,
since the definition is checked in Theorem~\ref{thm:PropSurjn}.)
First consider $\forall_{c}(\bot,s)$, the biggest open
$U\in\Omega\interval$ such that $c^{\ast}U\leq \rup s$. If
$c(t)<c(u)$ then $t<u$ (it is much more complicated for $\leq$), and it
follows that if $c(s\msign^{\omega})<c(u)$ then $u$ is in $\rup s$.
Hence $(c(s\msign^{\omega}),1]\leq\forall_{c}(\bot,s)$. If $s$ contains a $\psign$ then
$\forall_{c}(\bot,s)$ cannot be any bigger, for it would then contain
$c(s\msign^{\omega})$ itself. By looking at the last $\psign$ in $s$ we can replace
$\psign\msign^{\omega}$ by $\msign\psign^{\omega}$ and find a $u$ in $c^{\ast}(\forall_{c}%
(\bot,s))$ but not in $\rup s$. Hence $\forall_{c}(\bot
,s)=(c(s\msign^{\omega}),1]$. If $s$ has no $\psign$ then the argument is slightly
different. $\rup s= \rup\varepsilon=2^{\omega}$, so we
know $\forall_{c}(\bot,s)=\interval$. Similarly, $\forall_{c}(s,\bot)$ is
either $[-1,c(s\psign^{\omega}))$ or $\interval$.

There remains $\forall_{c}(s,t)$. If $c(s\psign^{\omega})<c(t\msign^{\omega})$ then this
turns out to be $[-1,c(s\psign^{\omega}))\vee(c(t\msign^{\omega}),1]$ as one might
expect, while if $c(s\psign^{\omega})>c(t\msign^{\omega})$ it is $\interval$. However
we have to take some care where there is equality, since we then find that
$\lup s\vee \rup t$ is $2^{\omega}$ and so $\forall
_{c}(s,t)$ must be $\interval$ -- this is an instance where $\forall_{c}$
does not preserve finite joins.

\begin{definition}
If $s,t\in2^{\ast}$ we write $s\overlap t$ if (i) $t<s$, or (ii) $t\sqsubseteq
s$, or (iii) $s\sqsubseteq t$, or (iv) $s$ and $t$ are of the forms $u\msign\psign^{k}$
and $u\psign\msign^{l}$ respectively.
\end{definition}

\begin{lemma}\label{lem:overlap}
  \begin{enumerate}
  \item
    $s\overlap t$ iff $\lup s\vee \rup t=2^{\omega}$.
  \item
    If $s\overlap t$ then $c(t\msign^{\omega})\leq c(s\psign^{\omega})$.
  \item
    $\overlap$ is up-closed in $S$.
  \end{enumerate}
\end{lemma}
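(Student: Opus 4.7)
My plan for part~(1) is to exploit the observation that, since $s,t\in 2^\ast$ and any $v\in 2^\omega$ is infinite, $v\notin\lup s$ is equivalent to $s<v$ and $v\notin\rup t$ is equivalent to $v<t$. Hence $\lup s\vee\rup t=2^\omega$ simply says that no infinite stream lies lex-strictly between $s$ and $t$. For the forward direction I will rule out such a $v$ case by case: (i) by transitivity of $<$; (ii) and (iii) by the remark that when $t\sqsubseteq s$, the first position where an incomparable $v$ differs from $t$ is also the first position where it differs from $s$, with the same letters, so $v<t$ forces $v<s$ (and symmetrically when $s\sqsubseteq t$); (iv) by a direct inspection at position $|u|+1$. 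For the converse, if $s\overlap t$ fails then necessarily $s<t$, so $s=u\msign s'$ and $t=u\psign t'$ with $u$ the greatest common prefix, and the failure of~(iv) forces either $s'$ to contain an $\msign$ or $t'$ to contain a $\psign$; in the first case the stream $u\msign\psign^\omega$ lies lex-strictly between $s$ and $t$, and $u\psign\msign^\omega$ handles the second symmetrically.

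For part~(2), my plan is to reduce to monotonicity of $c$ with respect to the lexicographic order on $2^\omega$, which follows directly from~\eqref{eq:c}: if $v$ and $w$ first differ at position $k$ with $v_k=\msign$ and $w_k=\psign$, then $c(w)-c(v)\geq 2\cdot 2^{-k}-\sum_{i>k}2\cdot 2^{-i}=0$. It then suffices to check $t\msign^\omega\lexl s\psign^\omega$ in each of the four cases: in~(i) this is witnessed at position $|u|+1$; in~(ii) and~(iii) at the boundary past the shorter of the two sequences; and in~(iv) the two streams actually have the same $c$-value $c'(u)$, so equality holds.

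Part~(3) will then follow at once from~(1) together with Lemma~\ref{lem:leftRightHook}~(2). An upward step in $S=S_\lup\times S_\rup$ sends $(s,t)$ to $(s',t')$ with $s\lexu s'$ and $t'\lexl t$, whence $\lup s\leq\lup s'$ and $\rup t\leq\rup t'$, and so $\lup s\vee\rup t=2^\omega$ propagates to $\lup s'\vee\rup t'=2^\omega$. The coordinates cannot be $\bot$ when starting from a pair in $\overlap\subseteq 2^\ast\times 2^\ast$, so the $\bot$ edge case does not arise.

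The main obstacle will be the reverse direction of~(1): producing an explicit infinite stream lex-strictly between $s$ and $t$ whenever $s\overlap t$ fails. The key is to let the failure of~(iv) dictate the direction ($u\msign\psign^\omega$ versus $u\psign\msign^\omega$) in which to insert a stream, and then to verify both strict lex inequalities at the appropriate positions. Once this construction is pinned down, parts~(2) and~(3) are routine.
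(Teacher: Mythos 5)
Your proof is essentially correct, but it departs from the paper's in two substantive ways, one of which needs a caveat.

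For part~(1), your forward direction reasons pointwise: you show that no stream $v\in 2^\omega$ satisfies $s<v<t$, and conclude $\lup s\vee\rup t=2^\omega$. In the localic, constructive setting of the paper that step is not automatic: passing from ``every point lies in the open'' to ``the open equals the top'' is spatiality, and Cantor space is not constructively provably spatial as a bare locale. What rescues you here is that $\lup s$ and $\rup t$ are finite joins of $\up u$'s, hence elements of the free Boolean algebra on the finitely many positions $1,\dots,\max(|s|,|t|)+1$; equality with $\top$ there is a decidable truth-table question, and the truth assignments are exactly restrictions of points, so your finite pointwise check really does close the gap. You should say so explicitly; the paper instead computes directly in the lattice, using Lemma~\ref{lem:leftRightHook}~(2)--(4), which sidesteps the issue altogether. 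Your reverse direction (exhibiting $u\msign\psign^\omega$ or $u\psign\msign^\omega$ as a separating point when $\overlap$ fails) matches the paper's. Your part~(2) is the same idea as the paper's (reduce to lex-monotonicity of $c$), though the paper reduces cases~(ii),(iii) to case~(i) rather than inspecting them directly, and your phrase ``at the boundary past the shorter of the two sequences'' is slightly loose --- the first disagreement can occur later if the longer word has trailing $\msign$'s (resp.\ $\psign$'s) --- but the conclusion $t\msign^\omega\lexl s\psign^\omega$ still holds.

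Your part~(3) is genuinely different from, and cleaner than, the paper's. The paper proves up-closure by a direct case split on the four clauses of $\overlap$; you instead observe that an upward step $(s,t)\leq(s',t')$ in $S$ gives $\lup s\leq\lup s'$ and $\rup t\leq\rup t'$ by Lemma~\ref{lem:leftRightHook}~(2), so $\lup s\vee\rup t=2^\omega$ propagates, and part~(1) converts back to $\overlap$. This buys a three-line argument in place of the paper's paragraph of casework, at the modest cost of making part~(3) logically dependent on part~(1). Your handling of the $\bot$ edge cases is correct: nothing above a non-$\bot$ element of $S_\lup$ or $S_\rup$ can be $\bot$.
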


\begin{proof}
(1) $\Rightarrow$: In cases (i) and (ii) of the definition we have $t\lexl s$,
so $2^{\omega}= \lup s\vee \rup s\leq
 \lup s\vee \rup t$, similarly in case (iii). In case
(iv), we have $\rup t= \rup(u\psign\msign^{l})= \rup
(u\psign)$ and similarly $\lup s= \lup(u\msign)$. Now%
\begin{align*}
  \true  &  \leq \left(  \lup(u\msign)\vee \rup(u\msign)\right)
            \wedge\left(  \lup(u\psign)\vee \rup(u\psign)\right) \\
         &  \leq \lup(u\msign)\vee \rup(u\psign)\vee\left(
            \rup(u\msign)\wedge \lup(u\psign)\right) \\
         &  = \lup(u\msign)\vee \rup(u\psign)\text{ because }u\msign <u\psign
  \text{.}
\end{align*}

$\Leftarrow$: $\overlap$ is decidable.
Its negation is that $s<t$,
so that for some $u$ we have $u\msign \sqsubseteq s$ and $u\psign \sqsubseteq t$,
and in addition that either
$u\msign\psign^{k}\msign \sqsubseteq s$ or $u\psign\msign^{k}\psign \sqsubseteq t$ for some $k$.
Suppose the former.
Then $s<u\msign\psign^{\omega}<t$, so $u\msign\psign^{\omega}$ is in neither $\lup s$ nor $\rup t$.

(2) In case (i): if $u\msign \sqsubseteq t$, $u\psign \sqsubseteq s$, then
$c(t\msign^{\omega})<c'(u)<c(s\psign^{\omega})$. In case (ii) (and (iii) is
dual), we have $t\msign^{k}<s\psign$ for some $k$, and can use (i). In case (iv),
$c(t\msign^{\omega})=c(u\psign\msign^{\omega})=c'(u)=c(u\msign\psign^{\omega})=c(s\psign^{\omega})$.

(3) Suppose $s\overlap t$.
We show that if $t'\lexl t$ then $s\overlap t'$.
By symmetry it also follows that if $s\lexu s'$ then $s'\overlap t$,
and the result will follow.
We examine the cases of $s\overlap t$.
First, if $t\lexl s$ then $t'\lexl s$.

Second, suppose $s\sqsubseteq t$.
If $t'\sqsubseteq t$ then $s$ and $t'$ are comparable under $\sqsubseteq$.
Otherwise $t'<t$ and so $t'\lexu s$.

Finally, suppose $s=u\msign\psign^k, t=u\psign\msign^l$.

If $t'\sqsubseteq t$ then either $t'\sqsubseteq u \sqsubseteq s$
or $u\psign\sqsubseteq t' \sqsubseteq t$ and either way we get $s\overlap t'$.

There remains the case $t'<t$.
We have either $t'<u$, so $t'< s$, or $u\msign\sqsubseteq t'$.
In this latter case consider whether $t'$ has any further $\msign$ after $u\msign$.
If it does then $t'\lexu s$;
if not then $s$ and $t'$ are comparable under $\sqsubseteq$.
\end{proof}

\begin{definition}\label{def:theta}
We define a lattice homomorphism $\theta_{\rup}\colon S_{\rup}\rightarrow\Omega\interval$ by%
\[
\theta_{\rup}(t)=\left\{
\begin{array}
[c]{ll}%
\interval & \text{if }t\in2^{\ast}\text{ and }t\text{ contains no }\psign\\
(c(t\msign^{\omega}),1] & \text{if }t\in2^{\ast}\text{ and }t\text{ contains at
least one }\psign\\
\emptyset & \text{if }t=\bot
\end{array}
\right.
\]

Similarly we define $\theta_{\lup}\colon S_{\lup}\rightarrow\Omega\interval$ with $\theta_{\lup}(s)=[-1,c(s\psign^{\omega}))$ when
$s$ contains a $\msign$.

The monotone function $\theta\colon S_{\lup}\times S_{\rup}\rightarrow\Omega\interval$ is defined by%
\[
\theta(s,t)=\left\{
\begin{array}
[c]{ll}%
\interval & \text{if }s,t\in2^{\ast}\text{ and }s\overlap t\\
\theta_{\lup}(s)\vee\theta_{\rup}(t) & \text{otherwise}%
\end{array}
\right.
\]
Note that if $t$ contains no $\psign$ or $s$ contains no $\msign$ then $s\overlap t$.
\end{definition}
That $\theta_{\rup}$ and $\theta_{\lup}$ are lattice homomorphisms
is simply to say that they are monotone and preserve top and bottom.
The monotonicity of $\theta$ then follows from that and from Lemma~\ref{lem:overlap}~(3).

\begin{lemma}
  We can define a preframe homomorphism
  $\forall_{c}\colon \Omega2^{\omega}\rightarrow\Omega\interval$ by $\forall_{c}(s,t)=\theta(s,t)$.
\end{lemma}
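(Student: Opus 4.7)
The plan is to apply the preframe coverage theorem via the preframe part of Lemma~\ref{lem:CantorPresn3}, which presents $\Omega 2^\omega$ as a preframe on the poset $S$ modulo seven explicit relations. It therefore suffices to verify that $\theta$ is monotone and respects each of those relations; then $\theta$ extends uniquely to a preframe homomorphism $\forall_c$. Monotonicity was already addressed in the commentary after Definition~\ref{def:theta}, using that $\theta_\lup$ and $\theta_\rup$ are monotone and that $\overlap$ is up-closed by Lemma~\ref{lem:overlap}(3).

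Most of the seven relations are routine. The three covering relations $\true \leq (s,\varepsilon)$, $\true \leq (\varepsilon, s)$, and $\true \leq (s,t)$ when $s$ and $t$ are comparable in $\lexl$ or $\lexu$ are immediate, because each forces $s \overlap t$ directly from the definition and so $\theta(s,t) = \interval$. The relation $(u,s) \wedge (s,v) \leq (s\msign, s\psign)$ is likewise immediate, since $s\msign$ and $s\psign$ sit in case~(iv) of $\overlap$, making $\theta(s\msign, s\psign) = \interval$. The relations $(s,t) \leq (s, t\msign)$ and $(s,t) \leq (s\psign, t)$ follow because $\theta_\rup(t) = \theta_\rup(t\msign)$ and $\theta_\lup(s) = \theta_\lup(s\psign)$ (neither $t\msign$ nor $s\psign$ introduces a new $\psign$ or $\msign$ respectively), together with a short case check that $\overlap$ is preserved when appending those letters.

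The main obstacle is the relation $(u,s) \wedge (t,v) \leq (u,v)$ under $t < s$ and $(u,v) \leq (t,s)$. I would distribute the meet into four joinands. The key geometric fact is that $\theta_\rup(s) \wedge \theta_\lup(t) = \emptyset$ when $t < s$: a witness $w$ with $w\msign \sqsubseteq t$ and $w\psign \sqsubseteq s$ yields $c(t\psign^\omega) \leq c'(w) \leq c(s\msign^\omega)$, making the half-open intervals disjoint. The other three joinands collapse by monotonicity of $\theta_\lup$ and $\theta_\rup$ to give $\theta_\lup(u) \vee \theta_\rup(v) \leq \theta(u,v)$. The genuinely delicate sub-case is when $u \overlap s$ (dually when $t \overlap v$), so that one factor is $\interval$ and the meet becomes $\theta(t,v)$; since $u \lexu t$ only gives monotonicity in the wrong direction, I would split on which clause of $\overlap$ witnesses $u \overlap s$. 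Cases (i) and (iii) turn out incompatible with the combined hypotheses $t < s$ and $u \lexu t$ (each forces $t < u$, contradicting $u \lexu t$); case (ii) pins the witness $w$ so that $u \sqsubseteq w$, whence $c(t\psign^\omega) \leq c'(w) \leq c(u\psign^\omega)$ and $\theta_\lup(t) \leq \theta_\lup(u)$; in case (iv), where $u = u'\msign\psign^k$ and $s = u'\psign\msign^l$, the constraint $u \lexu t$ pins $t$ to the form $u'\msign\psign^j$ for some $0 \leq j \leq k$, on which $c(t\psign^\omega) = c'(u')$ is independent of $j$, giving $\theta_\lup(t) = \theta_\lup(u)$ exactly.
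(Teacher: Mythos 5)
Your overall strategy is the same as the paper's: verify that $\theta$ is monotone and respects the relations of Lemma~\ref{lem:CantorPresn3}, with the penultimate relation needing the real work, and within that relation split on whether $u\overlap s$. The routine relations and the main bound via $\theta_\lup(t)\wedge\theta_\rup(s)=\emptyset$ match the paper. But there are problems in the $u\overlap s$ case split.

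The cosmetic problem: you have clauses (ii) and (iii) of $\overlap$ swapped. For $u\overlap s$, clause (ii) is $s\sqsubseteq u$, which together with $w\psign\sqsubseteq s$ forces $w\psign\sqsubseteq u$ (hence $t<u$), not $u\sqsubseteq w$; the scenario $u\sqsubseteq w$ arises inside clause (iii), $u\sqsubseteq s$. More to the point, \emph{every} one of clauses (i)--(iii) turns out vacuous under the combined hypotheses $t<s$ and $u\lexu t$ (one gets either $t<u$ or $u$ a proper prefix of $t$, both contradicting $u\lexu t$). So the compatible inequality you derive there is about an empty set of cases. This is confusing but not in itself fatal. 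The paper's route is cleaner: from $t<s$ and either $s\lexl u$ or $s\lexu u$ it derives $t\lexu u$ directly and finishes by monotonicity, never needing to invoke $u\lexu t$ for these clauses.

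The genuine gap is in clause (iv). You correctly pin $t=u'\msign\psign^j$ with $j\leq k$ and deduce $\theta_\lup(t)=\theta_\lup(u)$, but that only gives $\theta(t,v)\leq\theta(u,v)$ when $t\not\overlap v$. If in addition $t\overlap v$, then $\theta(t,v)=\interval$, and you must show $\theta(u,v)=\interval$, i.e.\ $u\overlap v$. Your argument, and the ``dually $t\overlap v$'' side you gesture at, never handles the situation where both overlaps hold simultaneously, so the inequality is not established. The paper closes this explicitly: from $t<s\lexl v$ one gets $t<v$, so $t\overlap v$ can only fall into clause (iv) of $\overlap$, forcing $v=u'\psign\msign^{l'}$; then $u=u'\msign\psign^k$ and $v=u'\psign\msign^{l'}$ are of form (iv), so $u\overlap v$. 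Without this step the proof does not go through.
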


\begin{proof}
One should check that the relations in Lemma~\ref{lem:CantorPresn3} are
respected. Much of this is routine. We consider the last two in more detail.

For the last but one, suppose $t<s$ and $(u,v)\leq (t,s)$. First,
\[
  (\theta_{\lup}(u)\vee\theta_{\rup}(s)) \wedge (\theta_{\lup}(t)\vee\theta_{\rup}(v))
    \leq \theta_{\lup}(u)\vee\theta_{\rup}(v) \vee (\theta_{\lup}(t)\wedge\theta_{\rup}(s))
    = \theta_{\lup}(u)\vee\theta_{\rup}(v)
  \text{.}
\]
This is because, given $t<s$, $t$ and $s$ must contain $\msign$ and $\psign$ respectively, so
\[
  \theta_{\lup}(t)\wedge\theta_{\rup}(s)
    = [-1,c(t\psign^\omega)) \wedge (c(s\msign^\omega),1] = \emptyset
\]
because $c(t\psign^\omega) \leq c(s\msign^\omega)$.

We still need to examine the cases where $\theta$ takes the value $\interval$.
Suppose $u\overlap s$.
(The case $t\overlap v$ is by symmetry.)
We must show $\theta(t,v)\leq\theta(u,v)$.
If $s\lexl u$ or $s\lexu u$ then from $t<s$ we find $t\lexu u$.
Now suppose $u=w\msign\psign^k, s=w\psign\msign^l$.
Since $t<s$, one possibility is that $t<w$, so $t<u$.
In all the cases so far $t\lexu u$, so $\theta(t,v)\leq\theta(u,v)$.
The remaining possibility (from $t<s$) is that $w\msign\sqsubseteq t$.
Since $u\lexu t$ we must have $t\sqsubseteq u$,
so $t=w\msign\psign^{k'}$ with $k'\leq k$, and $\theta_{\lup}(t)=\theta_{\lup}(u)$.
It remains only to consider the case where, in addition, $t\overlap v$.
From $t<s\lexl v$ we deduce $t<v$, and so $t\overlap v$ falls into its final case:
hence $v=w\psign\msign^{l'}$ for some $l'$, so $u\overlap v$.

The final relation, $(u,s)\wedge(s,v)\leq(s\msign,s\psign)$,
is clear since $s\msign\overlap s\psign$.
\end{proof}

\begin{theorem}\label{thm:PropSurjn}
  $c\colon 2^{\omega}\rightarrow\interval$ is a proper
  surjection, with $\forall_{c}$ right adjoint to $c^{\ast}$.
\end{theorem}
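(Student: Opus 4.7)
The plan is to carry out three verifications in turn: the adjunction $c^* \dashv \forall_c$, surjectivity of $c$, and Frobenius reciprocity. Together with the already established fact that $\forall_c$ is a preframe homomorphism, these give that $c$ is a proper surjection.

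For the adjunction I would verify the counit $c^*(\forall_c U) \leq U$ and the unit $V \leq \forall_c(c^* V)$ separately. The counit reduces, using that $\forall_c$ is a preframe hom and $c^*$ preserves finite meets and all joins, to a check on each preframe generator $U = (s,t) \in S$ from Lemma~\ref{lem:CantorPresn3}. If $s \overlap t$ the inequality is trivial since the right-hand side is $2^\omega$. Otherwise $\theta(s,t) = \theta_\lup(s) \vee \theta_\rup(t)$, and it suffices to check $c^*\theta_\rup(t) \leq \rup t$ and its dual. If $t$ has no $\psign$ this is trivial; otherwise write $t = u\psign\msign^k$ with $u$ the prefix of $t$ before the last $\psign$, so that $c(t\msign^\omega) = c'(u)$, and Proposition~\ref{prop:cStar} gives $c^*\theta_\rup(t) = \bigdvee_l \rup(u\psign\msign^l\psign)$, each summand being $\leq \rup t$ by Lemma~\ref{lem:leftRightHook}(2). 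For the unit, both $V \mapsto V$ and $V \mapsto \forall_c c^* V$ preserve finite meets and directed joins, so it suffices to check on the subbase of half-open intervals $(c'(s), 1]$ and $[-1, c'(s))$. A short induction from the defining equation for $c$ gives $c(s\psign\msign^k\psign\msign^\omega) = c'(s) + 2^{-|s|-k-1}$, and hence
\[
  \forall_c(c^*(c'(s), 1])
    = \bigdvee_k \theta(\bot,\, s\psign\msign^k\psign)
    = \bigdvee_k (c'(s) + 2^{-|s|-k-1},\, 1]
    = (c'(s), 1],
\]
in fact an \emph{equality}, and dually for $[-1, c'(s))$.

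Surjectivity follows at once from these equalities: $\forall_c \circ c^*$ preserves finite meets and directed joins, so equality on the subbase extends first to the base of open intervals (by finite meets) and then to all of $\Omega\interval$ (by directed joins), giving $\forall_c \circ c^* = \mathrm{id}$. Hence $c^*$ is injective and $c$ is a surjection.

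For Frobenius, $\forall_c(U \vee c^* V) = \forall_c U \vee V$, the $\geq$ direction is immediate from the adjunction. For $\leq$, both sides preserve directed joins in $U$, so I would reduce $U$ to a preframe generator $(s,t)$; then, taking a base of $\Omega\interval$ closed under finite meets, reduce $V$ to a basic open by directed joins on both sides. The remaining finite case analysis is expected to be the main obstacle: the jumps of $\theta$ to the value $\interval$ at overlap pairs do not commute with finite joins, and matching the two sides of Frobenius across this boundary requires a careful argument in the same spirit as the well-definedness check for $\forall_c$ on the preframe presentation.
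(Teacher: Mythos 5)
Your decomposition into adjunction, surjectivity and Frobenius matches the paper's three-step proof, and your counit argument ($c^*\forall_c \le \mathrm{Id}$) is essentially the same as the paper's and correct. But there are two real gaps. For the unit $\forall_c c^* = \mathrm{Id}$, you propose to check only the half-open intervals $[-1,c'(s))$ and $(c'(t),1]$, arguing that since both sides are preframe homomorphisms these opens form a sufficient subbase. They do not form a preframe base: finite meets of half-opens are single (possibly half-open) intervals, and directed joins of intervals are again intervals, so this never reaches disconnected opens such as $(-\tfrac12,-\tfrac14)\vee(\tfrac14,\tfrac12)$. A preframe base must also contain the joins $[-1,c'(s))\vee(c'(t),1]$, and this is exactly where the real work lies: $c^*$ of such a join is $\bigdvee_{kl}(s\msign\psign^k\msign,\, t\psign\msign^l\psign)$, and to conclude that $\forall_c c^*$ returns $[-1,c'(s))\vee(c'(t),1]$ rather than all of $\interval$ one must show that when $c'(s)\le c'(t)$ none of these pairs lie in the relation $\overlap$, since otherwise $\theta$ jumps to $\interval$. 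That non-overlap argument is the core of the paper's unit step and is entirely absent from your sketch; the easy computation on single half-open intervals does not substitute for it.

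For Frobenius you establish only the $\ge$ direction and explicitly defer the $\le$ case analysis as ``the main obstacle,'' so this part is not actually a proof. In the paper the $\le$ direction requires taking $s''$ to be the $\lexu$-greater of $s$ and $s'\msign\psign^k\msign$, and $t''$ the $\lexl$-lesser of $t$ and $t'\psign\msign^l\psign$, and then handling the awkward possibility $s''\overlap t''$ by showing that in that case $\forall_c$ of the relevant generator equals $\interval$ but is still bounded above by $[-1,c'(s'))\vee(c(t\msign^\omega),1]\le\forall_c a\vee b$. Until you carry out both overlap analyses (in the unit step and in Frobenius) the theorem is not established.
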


\begin{proof}
There are three things to show.

First, $c^{\ast}\circ\forall_{c}\leq\operatorname*{Id}$. For $s\overlap t$,
Lemma~\ref{lem:overlap} tells us that $(s,t)=2^{\omega}$. For the other case
it remains to show that $c^{\ast}(\theta_{\rup}(t))\leq
~\rup t$ (and similarly for $\lup$). If $t$ has no $\psign$
then $\rup t=2^{\omega}$, and otherwise we have%
\begin{align*}
c^{\ast}(\theta_{\rup}(t))
  &  =c^{\ast}((c(t\msign^{\omega}),1])
     =\bigdvee_{k}c^{\ast}\left(  (c'(t\msign^{k}),1]\right) \\
  &  =\bigdvee_{kl}\rup(t\msign^{k}\psign\msign^{l}\psign)\leq \rup t\text{.}%
\end{align*}

Second, $\forall_{c}\circ c^{\ast}=\operatorname*{Id}$.
It suffices to check this for opens of the form $[-1,c'(s))$, $(c'(t),1]$ and
$[-1,c'(s))\vee(c'(t),1]$,
since they form a preframe base of $\interval$. We have%
\begin{align*}
\forall_{c}\circ c^{\ast}\left(  [-1,c'(s))\vee(c'(t),1]\right)
   &  =\bigdvee_{kl}\forall_{c}(s\msign\psign^{k}\msign,t\psign\msign^{l}\psign)\\
   &  \geq\bigdvee_{kl}\left(
        [-1,c(s\msign\psign^{k}\msign\psign^{\omega}))
            \vee(c(t\psign\msign^{l}\psign\msign^{\omega}),1]\right) \\
   &  =\bigdvee_{k}[-1,c'(s\msign\psign^{k}))\vee\bigdvee_{l}(c'(t\psign\msign^{l}),1]\\
   &  =[-1,c'(s))\vee(c'(t),1]\text{.}%
\end{align*}

We have equality provided we have no $s\msign\psign^{k}\msign \overlap t\psign\msign^{l}\psign$
(and also, by a similar calculation, for the opens $[-1,c'(s))$ and $(c'(t),1]$).
If $c'(t)<c'(s)$ then $[-1,c'(s))\vee(c'(t),1]=\interval$,
so it remains to prove that if $c'(s)\leq c'(t)$
then we have no $s\msign\psign^{k}\msign \overlap t\psign\msign^{l}\psign$.
That is to say, for all $k,l$ we have $s\msign\psign^{k}\msign <t\psign\msign^{l}\psign$
(so for some $u$ we have $u\msign \sqsubseteq s\msign\psign^{k}\msign$ and $u\psign \sqsubseteq t\psign\msign^{l}\psign)$,
and for some $m$ we have either $u\msign\psign^{m}\msign \sqsubseteq s\msign\psign^{k}\msign$ or
$u\psign\msign^{m}\psign \sqsubseteq t\psign\msign^{l}\psign$.
(See Lemma~\ref{lem:overlap}.)
From $c'(s)\leq c'(t)$ we get three cases.
If $s<t$ then $u$ is a common prefix of $s$ and $t$
and in fact we have $m$ with $u\msign\psign^{m}\msign \sqsubseteq s\msign$.
If $s\sqsubseteq t$ then from $c'(s)\leq c'(t)$ we cannot have $s\msign \sqsubseteq t$,
so we can take $u=s$ and either $s=t$ or $s\psign \sqsubseteq t$.
Either way, $u\psign \sqsubseteq t\psign$.
Then we can take $m=k$.
The argument for $t\sqsubseteq s$ is similar.

Third, the Frobenius condition $\forall_{c}(a\vee c^{\ast}b)=\forall_{c}a\vee
b$ -- in fact only the $\leq$ direction is necessary now. It suffices to check
the case where $a$ and $b$ are preframe basics. Suppose $a$ and $b$ are
$(s,t)\in S$ and $[-1,c'(s'))\vee(c'(t'),1]$,
so%
\[
  \forall_{c}(a\vee c^{\ast}b)
    =\bigdvee_{kl}\forall_{c}((s,t)\vee(s'\msign\psign^{k}\msign,t'\psign\msign^{l}\psign))\text{.}%
\]
We must therefore check
$\forall_{c}((s,t)\vee(s'\msign\psign^{k}\msign,t'\psign\msign^{l}\psign))\leq\forall_{c}a\vee b$
for each $k,l$.
Let $s''$ be
the greater of $s,s'\msign\psign^{k}\msign$ with respect to $\lexu$,
and let $t''$ be the smaller of $t,t'\psign\msign^{l}\psign$ with respect to $\lexl$.
Unless $s''\overlap t''$, we have
$\forall_{c}(s'',t'')=\theta_{\lup}(s'') \vee\theta_{\rup}(t'') \leq\forall_{c}a\vee b$.
(Note that
\[
  \theta_{\lup}(s'\msign\psign^k \msign) = [-1,c(s'\msign\psign^k \msign \psign^\omega))
    = [-1,c'(s'\msign\psign^k)) \leq [-1,c'(s'))
    \text{,}
\]
and similarly for $\theta_{\rup}(t'\psign\msign^{l}\psign$)$.)$
Also, if $s''$ and $t''$ are either $s$ and $t$ or
$s'\msign\psign^{k}\msign$ and $t'\psign\msign^{l}\psign$ then
$\forall_{c}(a\vee c^{\ast}b)\leq\forall_{c}a$
or $\forall_{c}(a\vee c^{\ast}b)\leq\forall_{c}c^{\ast}b=b$.

There are two remaining cases where we must consider $s''\overlap t''$,
but each follows from the other by $\psign$-$\msign$ duality,
so we consider $s\lexu s'\msign\psign^{k}\msign \overlap t\lexl t'\psign\msign^{l}\psign$.
From Lemma~\ref{lem:overlap} we see%
\[
  c(t\msign^{\omega})\leq c(s'\msign\psign^{k}\msign\psign^{\omega})=c'(s'\msign\psign^{k})<c'(s')
\]
so that%
\begin{align*}
  \forall_{c}((s,t)\vee(s'\msign\psign^{k}\msign,t'\psign\msign^{l}\psign))
    &  =\forall_{c}(s'\msign\psign^{k}\msign,t)=\interval\\
    &  =[-1,c'(s'))\vee(c(t\msign^{\omega}),1]\\
    &  \leq\forall_{c}(s,t)\vee\lbrack-1,c'(s'))\vee(c'(t'),1]=\forall_{c}a\vee b
    \text{.}%
\end{align*}

We have neglected the preframe basics where one of $s,t$ is $\bot$, or we just
have $[-1,c'(s'))$ or $(c'(t'),1]$. However,
these cases can easily be covered in the reasoning above.
\end{proof}

\section{$\interval$ as coequalizer of maps to Cantor space}\label{sec:Coequ}

We observe that $0_{\msign}=\psign\msign^{\omega}$ and $0_{\psign}=\msign\psign^{\omega}$ in $2^{\omega}$
are both mapped by $c$ to $0$. This is the starting point for describing $c$
as a coequalizer of two maps to $2^{\omega}$.

\begin{definition}
  We define two maps $u_{\pm}\colon 2^{\ast}\rightarrow2^{\omega}$
  by $u_{\pm}(s)=s0_{\pm}$.
\end{definition}

Since $c(0_{\msign})=c(0_{\psign})$, it is clear that $c\circ u_{\msign}=c\circ u_{\psign}$. We
shall show that $c$ is in fact the coequalizer of $u_{\msign}$ and $u_{\psign}$.

For the moment, let us write $C$ for this coequalizer. We shall describe its
frame $\Omega C$ as a subframe of $\Omega2^{\omega}$ --
it is the equalizer of the frame homomorphisms $u^\ast_\pm$.
From the Stone space
structure of $2^{\omega}$ we see that $\Omega2^{\omega}$ can be described as
the frame of subsets $U$ of $2^{\ast}$, up-closed under the prefix order, and
such that if $s\psign,s\msign\in U$ then $s\in U$. If $t\in2^{\ast}$ then $\up t$
is the principal upset of $t$, so for $s$ in $2^{\omega}$ we have
$s\vDash\,\up t$ iff $t\sqsubseteq s$.

\begin{proposition}\label{prop:omegaC}
  $\Omega C$ is the frame of those subsets $U\in2^{\omega}$
  satisfying the condition that for all finite sign sequences $s$,%
  \[
  (\exists m)s\psign\msign^{m}\in U\longleftrightarrow(\exists n)s\msign\psign^{n}\in U\text{.}%
  \]
\end{proposition}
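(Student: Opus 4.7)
My plan is to identify $\Omega C$ with the equalizer in $\mathbf{Fr}$ of the two frame homomorphisms $u^\ast_+, u^\ast_- \colon \Omega 2^\omega \to \Omega 2^\ast$ (coequalizers in $\mathbf{Loc}$ being equalizers of frames, which in $\mathbf{Fr}$ are carried by the underlying subset on which the homomorphisms agree), and then to make the equalizer condition explicit using the description of $\Omega 2^\omega$ as up-closed subsets $U\subseteq 2^\ast$ satisfying the splitting property. Thus $\Omega C$ will be realised as the subframe $\{U\in\Omega 2^\omega : u^\ast_+(U)=u^\ast_-(U)\}$.

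For the computation of $u^\ast_\pm$ I use the defining property that a stream $\alpha\in 2^\omega$ satisfies an open $U$ iff some finite prefix of $\alpha$ lies in $U$. Since $\Omega 2^\ast=\mathcal{P}(2^\ast)$ and $u_\pm$ acts on points by $u_\pm(s)=s0_\pm$, this gives
\[
s\in u^\ast_+(U)\iff \bigl(\exists t\sqsubseteq s,\ t\in U\bigr)\ \lor\ \bigl(\exists k\geq 0,\ s\psign\msign^k\in U\bigr),
\]
and the symmetric formula for $u^\ast_-(U)$ with $\psign$ and $\msign$ swapped. The prefix-clause is common to both descriptions, so the condition $u^\ast_+(U)=u^\ast_-(U)$ collapses, $s$ by $s$, to the biconditional
\[
(\exists m)\,s\psign\msign^m\in U\ \longleftrightarrow\ (\exists n)\,s\msign\psign^n\in U.
\]
For any $s$ having some prefix $t\in U$, up-closure of $U$ forces both $s\psign\msign^0$ and $s\msign\psign^0$ to lie in $U$, so the biconditional is automatic; hence quantifying it over all $s$ is equivalent to quantifying it over those $s$ with no prefix in $U$, which is exactly what the equalizer condition requires. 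That yields the stated characterisation of $\Omega C$.

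The only real obstacle is bookkeeping: keeping straight that an element $U\in\Omega 2^\omega$ plays two roles — as a subset of $2^\ast$ (making statements like ``$s\psign\msign^m\in U$'' literal membership) and as an open whose points are streams with some prefix in $U$ — and verifying that the prefix-clause in the formulae for $u^\ast_+(U)$ and $u^\ast_-(U)$ really does cancel. Once those identifications are in place, the argument is purely formal, with no appeal to the earlier analysis of Cantor space beyond the standard Stone description of $\Omega 2^\omega$.
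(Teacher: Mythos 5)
Your proof is correct and follows essentially the same route as the paper: identify $\Omega C$ with the equalizer of $u_+^\ast$ and $u_-^\ast$ in $\Fr$, use the Stone description of $\Omega 2^\omega$ as suitable up-closed subsets of $2^\ast$, and compute $u_\pm^\ast(U)$ pointwise. The only cosmetic difference is how the prefix clause is discharged: you keep it explicit in the formula for $u_\pm^\ast(U)$ and argue separately that it makes both sides automatic, whereas the paper notes that up-closure lets the prefix clause be absorbed outright into $(\exists k)\,s\msign\psign^k\in U$ (take $k=0$), giving $u_\pm^\ast(U)$ directly as the set where the respective existential holds; both versions are sound.
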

\begin{proof}
  We have%
  \[
    u_{\msign}^{\ast}(U)=\{s\mid s\msign\psign^{\omega}\vDash U\}
      =\{s\mid(\exists t\in U)t\sqsubseteq s\msign\psign^{\omega}\}
      =\{s\mid(\exists m)s\msign\psign^{m}\in U\}
  \]
  and similarly for $u_{\psign}^{\ast}(U)$.
  The result is now immediate from the fact that $U\in\Omega2^{\omega}$ is in $\Omega C$
  iff $u_{\msign}^{\ast}(U)=u_{\psign}^{\ast}(U)$.
\end{proof}

Having identified $\Omega C$ concretely, our task is now to show that it is
isomorphic to $\Omega\interval$.
The next definition defines two decidable relations on $2^\ast$
that capture (see Proposition~\ref{prop:lmidMidl}) properties of $c'$ and $c$.
For example, $s\lmid t$ holds if, for any stream extending $t$,
we have $c'(s) < c(t)$.

\begin{definition}\label{def:ltBar}
  If $s,t\in2^{\ast}$ then we write $s\lmid t$ if either $s<t$,
  or there is some $k$ with $s\psign\msign^{k}\psign \sqsubseteq t$.

  We write $t\midl s$ if either $t<s$,
  or there is some $k$ with $s\msign\psign^{k}\msign \sqsubseteq t$.
\end{definition}

In other words, for $s\lmid t$ either at the first difference $s$ has $\msign$ and
$t$ has $\psign$, or $s\sqsubseteq t$ and $t$ has $\psign$ immediately after $s$, and at
least one more $\psign$ somewhere further along.

\begin{proposition}\label{prop:lmidMidl}
Let $s,t\in2^{\ast}$.
Then $\up t\leq c^{\ast}((c'(s),1])$ iff $s\lmid t$,
and $\up t\leq c^{\ast}([-1,c'(s)))$ iff $t\midl s$.
\end{proposition}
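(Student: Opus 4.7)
The plan is to apply Proposition~\ref{prop:cStar}(1), which gives
\[
c^{\ast}((c'(s),1]) = \bigdvee_k \rup(s\psign\msign^k\psign),
\]
and to exploit that $\up t$ is a compact open of the Stone locale $2^\omega$. Since the join is directed, compactness reduces the inclusion $\up t \leq c^{\ast}((c'(s),1])$ to the existence of some $k$ with $\up t \leq \rup(s\psign\msign^k\psign)$. Unfolding Definition~\ref{def:leftRightHook}, the latter says: every stream extending $t$ is $\lexl$-above $s\psign\msign^k\psign$.

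For the forward direction $s\lmid t \Rightarrow \up t \leq c^{\ast}((c'(s),1])$, I would dispatch the two clauses of Definition~\ref{def:ltBar} separately. If $s<t$ via a common prefix $u$ with $u\msign\sqsubseteq s$ and $u\psign\sqsubseteq t$, then for any $k$ and any $v$ extending $t$ we have $u\msign\sqsubseteq s\sqsubseteq s\psign\msign^k\psign$ and $u\psign\sqsubseteq v$, so $s\psign\msign^k\psign < v$; any $k$ works. If instead $s\psign\msign^k\psign\sqsubseteq t$ for some specific $k$, that same $k$ works trivially since $s\psign\msign^k\psign\sqsubseteq t\sqsubseteq v$.

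For the converse, assume $\up t \leq \rup(s\psign\msign^k\psign)$ for some $k$, and test on the stream $v=t\msign^\omega\in\up t$. Then $s\psign\msign^k\psign \lexl t\msign^\omega$. If $s\psign\msign^k\psign\sqsubseteq t\msign^\omega$, the terminal $\psign$ of $s\psign\msign^k\psign$ must lie within $t$, so $s\psign\msign^k\psign\sqsubseteq t$, which is the second clause of $s\lmid t$. Otherwise $s\psign\msign^k\psign < t\msign^\omega$, witnessed by a prefix $u$ with $u\msign\sqsubseteq s\psign\msign^k\psign$ and $u\psign\sqsubseteq t\msign^\omega$; since $\psign$ occurs in $t\msign^\omega$ only within $t$, in fact $u\psign\sqsubseteq t$. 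A case analysis on where $|u|+1$ falls inside the block structure of $s\psign\msign^k\psign$ shows that either $u\msign\sqsubseteq s$ (whence $s<t$) or $u=s\psign\msign^{j-1}$ for some $1\leq j\leq k$ (whence $s\psign\msign^{j-1}\psign\sqsubseteq t$). Either way $s\lmid t$.

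The second assertion follows by the symmetric ($\psign$--$\msign$ dual) argument, using Proposition~\ref{prop:cStar}(2) and testing on $v=t\psign^\omega$. The main nuisance will be the small case split in the converse direction, keeping track of how the separating prefix $u$ aligns with the three blocks of $s\psign\msign^k\psign$; once that bookkeeping is done, everything else is immediate from compactness of $\up t$ and the previous calculations.
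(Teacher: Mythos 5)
Your proposal is correct and takes essentially the same route as the paper: reduce via Proposition~\ref{prop:cStar} and compactness of the clopen $\up t$ to the existence of a single $k$ with $\up t \leq \rup(s\psign\msign^k\psign)$, then identify that condition with $s\lmid t$. The paper dispatches the final equivalence with a bare ``clearly''; your case analysis on where the separating prefix lands inside the block structure of $s\psign\msign^k\psign$ (and on $v=t\msign^\omega$) is a correct unpacking of that step.
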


\begin{proof}
We prove only the first part, since the second follows by interchanging $\psign$
and $\msign$. Using Proposition~\ref{prop:cStar} and the compactness of $\up
t$, we see that $\up t\leq c^{\ast}((c'(s),1])$ iff $\up
t\leq \rup(s\psign\msign^{k}\psign)$ for some $k$, and this clearly holds iff
$s\lmid t$.
\end{proof}

\begin{proposition}
$\Omega C$ is the image of $c^{\ast}$.
\end{proposition}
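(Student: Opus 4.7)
One inclusion, image of $c^* \subseteq \Omega C$, is immediate from $c \circ u_- = c \circ u_+$: for $V \in \Omega\interval$ we have $u_-^* c^* V = (c \circ u_-)^* V = (c \circ u_+)^* V = u_+^* c^* V$.

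For the reverse inclusion I fix $U \in \Omega C$ and aim at $c^* \forall_c U = U$. Since Theorem~\ref{thm:PropSurjn} already provides $c^* \forall_c U \leq U$, it suffices to establish $U \leq c^* \forall_c U$, which by the $\up$-basis of $\Omega 2^\omega$ reduces to producing, for every $t \in 2^*$ with $\up t \leq U$, an open $V_t \in \Omega\interval$ satisfying $\up t \leq c^* V_t \leq U$. The adjunction then yields $V_t \leq \forall_c U$, hence $\up t \leq c^* V_t \leq c^* \forall_c U$.

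I construct $V_t$ by cases on the sign content of $t$. When $t$ has both signs, write $t = t_L\psign\msign^a = t_R\msign\psign^b$, where exactly one of $a,b$ is zero. Applying Proposition~\ref{prop:omegaC} at $s = t_L$ (using $t = s\psign\msign^a \in U$) and at $s = t_R$ produces $n, m \geq 0$ with $\up(t_L\msign\psign^n) \leq U$ and $\up(t_R\psign\msign^m) \leq U$; set $V_t = (c'(t_L\msign\psign^n), c'(t_R\psign\msign^m))$. From $c(t\msign^\omega) = c'(t_L)$, $c(t\psign^\omega) = c'(t_R)$, $c'(t_L\msign\psign^n) < c'(t_L)$ and $c'(t_R) < c'(t_R\psign\msign^m)$, the open interval $V_t$ contains $c[\up t] = [c'(t_L), c'(t_R)]$, giving $\up t \leq c^* V_t$. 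The degenerate cases $t = \psign^k$, $t = \msign^k$, and $t = \varepsilon$ are handled analogously, with $V_t$ extending to $1$, to $-1$, or to all of $\interval$ respectively, applying Proposition~\ref{prop:omegaC} at $s = \psign^{k-1}$ or $s = \msign^{k-1}$.

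The substantive step is $c^* V_t \leq U$ in the two-signs case. By Proposition~\ref{prop:lmidMidl}, $\up w \leq c^* V_t$ iff $t_L\msign\psign^n \lmid w$ and $w \midl t_R\psign\msign^m$. When either condition holds via the prefix clause of Definition~\ref{def:ltBar}, $w$ extends $t_L\msign\psign^n$ or $t_R\psign\msign^m$ and so lies in $U$ directly. The remaining case, in which both conditions reduce to strict lex inequalities $t_L\msign\psign^n < w < t_R\psign\msign^m$, is the main obstacle. In the subcase $a = 0$ one has $t_L = t_R\msign\psign^{b-1}$; the requirement $w < t_R\psign\msign^m$ together with the shared prefix $t_R$ of the two sides rules out $w$ starting with $t_R\psign$ and so forces $w$ to start with $t_R\msign$, and then the strict inequality $t_L\msign\psign^n < w$ (combined with $t_L\msign\psign^n$ having no $\msign$ beyond position $|t_L|+1$) forces $w$ to continue with exactly $\psign^b$. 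Hence $w$ begins with $t_R\msign\psign^b = t$, so $w \in U$; the subcase $b = 0$ is symmetric. The existence of $n$ and $m$ extracted from the $\Omega C$-condition is precisely what makes these boundary contributions to $c^* V_t$ land in $U$.
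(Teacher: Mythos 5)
Your proof is correct and takes essentially the same approach as the paper's: both reduce the reverse inclusion to finding, for each $u$ with $\up u\leq U$, an interval of $\interval$ whose $c^\ast$-image contains $\up u$ and lies inside $U$, and both produce that interval's endpoints from the $\Omega C$-condition of Proposition~\ref{prop:omegaC} applied just before the two trailing sign-runs of $u$, then finish with the same combinatorial case analysis via Proposition~\ref{prop:lmidMidl} and the lexicographic/prefix dichotomy of Definition~\ref{def:ltBar}. The only presentational difference is that you package the conclusion through the adjunction $c^\ast\dashv\forall_c$ of Theorem~\ref{thm:PropSurjn} (showing $U=c^\ast\forall_c U$), whereas the paper exhibits $U$ directly as a join of $c^\ast$-images of dyadic intervals; these are the same argument.
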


\begin{proof}
Since $c$ composes equally with $u_{\psign}$ and $u_{\msign}$, we know that it factors
via $C$ and so $\Omega C$ contains the image of $c^{\ast}$.

We show that if $U\subseteq2^{\ast}$ satisfies the condition of
Proposition~\ref{prop:omegaC}, then it is a join of images under $c^{\ast}$ of
dyadic open intervals in $\interval$.

Let $u\in U$. If $u=\varepsilon$ is empty then by up-closure $U=2^{\ast
}=c^{\ast}(\interval)$.

Next, suppose $u=\psign^{n}$ for some $n\geq1$. By the condition on $U$, we find
$s=\psign^{n-1}\msign\psign^{m}\in U$ for some $m$. Then $s\lmid u$; we show that
$\{t\in2^{\ast}\mid s\lmid t\}\subseteq U$. Suppose $s\lmid t$. If $s$ and $t$
disagree, it must be at the $\msign$ in $s$, so $u\sqsubseteq t$ and $t\in U$. On
the other hand, if $s\sqsubseteq t$ then again $t\in U$. The case where
$u=\msign^{n}$ is similar.

Now suppose $u$ contains both $\psign$ and $\msign$. By symmetry it suffices to consider
the case where $U$ ends in $\msign$: so we can write $u=u'\psign\msign^{n}$ with
$n\geq1$. By the condition on $U$ we can find $s_{0}=u'\msign\psign^{m}\in U$
and also $s_{1}=u'\psign\msign^{n-1}\psign\msign^{k}\in U$.
We have $s_{0}\lmid u\midl s_{1}$.
Suppose $s_{0}\lmid t\midl s_{1}$.
If $s_{0}\sqsubseteq t$ or $s_{1}\sqsubseteq t$ then $t\in U$.
Thus we assume $s_{0}<t<s_{1}$.
It cannot disagree with $u'$,
since in its disagreement it would have to have both $\psign$ and $\msign$.
Hence $u'\sqsubseteq t$.
The disagreement with $s_{0}$ must therefore be at the $\msign$ immediately after $u'$.
It follows that $t$ agrees with $s_{1}$ at the first $\psign$ after $u'$,
so the disagreement must be at the second.
Hence $u=u'\psign\msign^{n}\sqsubseteq t$ and $t\in U$.
\end{proof}

After Theorem~\ref{thm:PropSurjn} we can now conclude --

\begin{theorem}
$c\colon 2^{\omega}\rightarrow\interval$ is the coequalizer of
$u_{\pm}\colon 2^{\ast}\rightrightarrows2^{\omega}$.
\end{theorem}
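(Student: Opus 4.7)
The plan is to combine Theorem~\ref{thm:PropSurjn} with the two preceding propositions about $\Omega C$. Since $c\circ u_\msign = c\circ u_\psign$ has already been observed, the universal property of the coequalizer $C$ supplies a unique factorization $c = \bar{c}\circ q$, where $q\colon 2^\omega \to C$ is the coequalizer map. To finish, I would show that $\bar{c}\colon C\to\interval$ is a locale isomorphism, for then $c$ inherits the universal property of $q$ and is itself a coequalizer of $u_\pm$.

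At the frame level we have $c^\ast = q^\ast\circ\bar{c}^\ast$, where $q^\ast$ identifies $\Omega C$ with the subframe of $\Omega 2^\omega$ carved out by the condition of Proposition~\ref{prop:omegaC}. By Theorem~\ref{thm:PropSurjn} the map $c$ is a (proper) surjection, so $c^\ast$ is injective, and therefore $\bar{c}^\ast$ is injective too. The immediately preceding proposition asserts that the image of $c^\ast$ is exactly $\Omega C$, which is to say that $\bar{c}^\ast$ surjects onto $\Omega C$. Hence $\bar{c}^\ast$ is a frame isomorphism and $\bar{c}$ a locale isomorphism, from which the coequalizer property of $c$ follows formally.

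The main obstacle has, in effect, already been discharged by the earlier results: the substantive content is the injectivity of $c^\ast$ from Theorem~\ref{thm:PropSurjn} together with the explicit identification of its image in the last proposition. The step that remains is essentially bookkeeping, confirming that the canonical factorization $\bar{c}$ through the coequalizer is the expected isomorphism.
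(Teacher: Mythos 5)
Your proposal is correct and follows the same route as the paper: injectivity of $c^\ast$ is extracted from the proper-surjectivity result (Theorem~\ref{thm:PropSurjn}, via $\forall_c \circ c^\ast = \mathrm{Id}$), surjectivity onto $\Omega C$ is exactly the content of the preceding image proposition, and the conclusion that $\bar{c}$ is an isomorphism is the "bookkeeping" that the paper leaves implicit in the phrase ``After Theorem~\ref{thm:PropSurjn} we can now conclude.'' You have simply made the final factorization-through-$q$ step explicit.
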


\section{$\interval$ is an interval object in $\Loc$}\label{sec:Initiality}

Let $A$ be an iterative midpoint algebra equipped with points $a_{\pm}$. We
shall also write%
\begin{align*}
a_{0}  &  =m(a_{\msign},a_{\psign})\\
a_{\pm/2}  &  =m(a_{0},a_{\pm})\text{.}%
\end{align*}

If $N\colon \interval\rightarrow A$ as in Definition~\ref{def:intervalObject}, then
$Nc\colon 2^{\omega}\rightarrow A$ is the map $M=M_{a_{\msign}a_{\psign}}$, for%
\[
  Nc(\pm s)=Nm(\pm1,c(s))=m(N(\pm1),Nc(s))=m(a_{\pm},Nc(s))\text{.}%
\]
We can define $M$ regardless of $N$, so it therefore remains to prove (i) that
$M$ factors via $\interval$, as $M=Nc$ for some $N\colon \interval\rightarrow A$,
and (ii) that $N$ is then a midpoint algebra homomorphism.

\begin{lemma}\label{lem:Mpmomega}
  $M(\pm^{\omega})=a_{\pm}$.
\end{lemma}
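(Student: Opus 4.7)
The plan is to exploit the defining equation of $M=M_{a_{\msign}a_{\psign}}$ together with the uniqueness clause of iterativity (Definition~\ref{def:itMidptAlg}), applied to the terminal locale.

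First I would use that the streams $\psign^{\omega}$ and $\msign^{\omega}$ are fixed by the tail map $t\colon 2^{\omega}\rightarrow 2^{\omega}$, so the characterizing equation $M(\pm s)=m(a_{\pm},M(s))$ specializes to
\[
  M(\pm^{\omega})=m(a_{\pm},M(\pm^{\omega}))\text{.}
\]
Thus $M\circ\pm^{\omega}\colon 1\rightarrow A$ is a fixed point of the endomorphism $y\mapsto m(a_{\pm},y)$ on $A$. Separately, $a_{\pm}\colon 1\rightarrow A$ is also such a fixed point, since by the first midpoint axiom $m(a_{\pm},a_{\pm})=a_{\pm}$.

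The main step is to conclude that these two fixed points coincide, and this is where the iterative property is genuinely used. I would invoke Definition~\ref{def:itMidptAlg} with $X=1$ (the terminal locale), head $h=a_{\pm}\colon 1\rightarrow A$, and tail $t=\mathrm{id}_{1}$: iterativity then supplies a \emph{unique} morphism $f\colon 1\rightarrow A$ satisfying $f=m(a_{\pm},f)$. Both $a_{\pm}$ and $M\circ\pm^{\omega}$ qualify, so they agree, and $M(\pm^{\omega})=a_{\pm}$ follows. No real obstacle is anticipated; the lemma amounts to reading off the instance of iterativity on the terminal object, and cancellativity (not assumed here) is not needed.
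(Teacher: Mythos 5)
Your proof is correct and takes essentially the same route as the paper: both observe that $M(\pm^{\omega})$ and $a_{\pm}$ satisfy the fixed-point equation $x=m(a_{\pm},x)$, and both invoke iterativity with $X=1$, $h=a_{\pm}$ and $t=\mathrm{id}_1=!$ to obtain uniqueness of such a fixed point.
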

\begin{proof}
  By the defining property of $M$,
  $M(\pm^{\omega})$ is a point $x_{\pm}$ such that $m(a_{\pm},x_{\pm})=x_{\pm}$.
  But by considering the maps $a_{\pm}\colon 1\rightarrow A$ and $!\colon 1\rightarrow1$ as $h$ and $t$ in
  Definition~\ref{def:convexBody},
  we see that there is a unique map $x_{\pm}\colon 1\rightarrow A$ such that $m(a_{\pm},x_{\pm})=x_{\pm}$.
  Since $a_{\pm}$ satisfies this condition, we deduce $x_{\pm}=a_{\pm}$.
\end{proof}

\begin{proposition}
  $M$ composes equally with $u_{\pm}\colon 2^{\ast}\rightarrow2^{\omega}$.
\end{proposition}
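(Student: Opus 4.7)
The plan is to exploit the fact that $2^\ast$ is discrete, so that morphisms out of it are determined pointwise, and then to run a straightforward induction on the length of $s\in 2^\ast$ using the defining recursion of $M$. Concretely, I would reduce the assertion $M\circ u_\msign = M\circ u_\psign$ to the family of equations $M(s 0_\msign)=M(s 0_\psign)$ for every $s\in 2^\ast$, and then prove this family by induction on $|s|$.

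For the base case $s=\varepsilon$, I unfold $u_\pm(\varepsilon)=0_\pm$. Applying the defining equation $M(\pm t)=m(a_\pm,M(t))$ together with Lemma~\ref{lem:Mpmomega} gives
\[
  M(0_\msign)=M(\psign\msign^\omega)=m(a_\psign,M(\msign^\omega))=m(a_\psign,a_\msign),
\]
and similarly $M(0_\psign)=m(a_\msign,a_\psign)$. Commutativity of the midpoint operation (one of the axioms in Definition~\ref{def:midptAlg}) then makes these equal.

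For the induction step, the key observation is the identity $u_\pm(\sigma s)=\sigma\, u_\pm(s)$ for any $\sigma\in\{\msign,\psign\}$, which is immediate from the definition of $u_\pm$ as postfixing the string $0_\pm$. Hence, assuming $M(u_\msign(s))=M(u_\psign(s))$, the defining equation of $M$ gives
\[
  M(u_\pm(\sigma s))=M(\sigma\, u_\pm(s))=m(a_\sigma,M(u_\pm(s))),
\]
and the inductive hypothesis finishes the step.

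There is no real obstacle here: the only thing to be a little careful about is the appeal to pointwise reasoning. Since $2^\ast$ is a discrete locale, two morphisms $2^\ast\rightrightarrows A$ agree as soon as they agree on all (global) points, so the pointwise induction above is sufficient. The work then is entirely algebraic, relying on commutativity of $m$, the identity $M(\pm t)=m(a_\pm,M(t))$ defining $M$, and Lemma~\ref{lem:Mpmomega} to evaluate $M$ on the two constant streams.
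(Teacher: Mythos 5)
Your proof is correct and takes essentially the same route as the paper: the base case $s=\varepsilon$ from Lemma~\ref{lem:Mpmomega}, the defining recursion $M(\pm t)=m(a_\pm,M(t))$ and commutativity of $m$, followed by induction on $|s|$. You spell out the inductive step and the appeal to discreteness of $2^\ast$ more explicitly than the paper, which simply states ``it now follows by induction,'' but the substance is identical.
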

\begin{proof}
  From Lemma~\ref{lem:Mpmomega} we have
  $M(\psign\msign^{\omega})=m(a_{\psign},a_{\msign})=m(a_{\msign},a_{\psign})=M(\msign\psign^{\omega})$, i.e. $M(u_{\psign}(\varepsilon))=M(u_{\msign}(\varepsilon))$.
  It now follows by induction on the length of $s$ that $M(u_{\psign}(s))=M(u_{\msign}(s))$ for
  all $s\in2^{\ast}$.
\end{proof}

It follows that $M$ factors via $\interval$, as $Nc$ for some unique
$N\colon \interval\rightarrow A$.

It remains to be shown that $N$ preserves midpoints, i.e. that $m(N\times
N)=Nm$. Since $c$ is a proper surjection, so too is $c\times c$ and so it
suffices to show that $m(Nc\times Nc)=m(M\times M)=Nm(c\times c)\colon 2^{\omega
}\times2^{\omega}\rightarrow A$.

\begin{definition}
  $\half\colon 2^{\omega}\rightarrow2^{\omega}$ is defined by%
  \[
    \half(\pm s)=\pm\mp s\text{.}%
  \]
\end{definition}

\begin{lemma}\label{lem:half}%
\[
  M_{a_{\msign}a_{\psign}}\half s=m(a_{0},M_{a_{\msign}a_{\psign}}s)\text{.}%
\]
\end{lemma}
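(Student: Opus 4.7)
The plan is to verify the identity by direct computation, using the defining recursion $M(\pm s) = m(a_\pm, M(s))$ (I write $M$ for $M_{a_{\msign}a_{\psign}}$) together with the midpoint-algebra axioms of Definition~\ref{def:midptAlg}. Unfolding the left-hand side via the definition of $\half$ gives
\[
  M(\half(\pm s)) = M(\pm\mp s) = m(a_\pm, M(\mp s)) = m(a_\pm, m(a_\mp, M(s)))
  \text{.}
\]

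On the right-hand side, the same recursion yields
\[
  m(a_0, M(\pm s)) = m\bigl(m(a_\msign, a_\psign), m(a_\pm, M(s))\bigr)
  \text{,}
\]
so it suffices to establish, for arbitrary $x\in A$, the identity
\[
  m(a_\pm, m(a_\mp, x)) = m\bigl(m(a_\msign, a_\psign), m(a_\pm, x)\bigr)
  \text{.}
\]
For $\pm=\psign$, the plan is to rewrite the left side by using idempotency $a_\psign = m(a_\psign, a_\psign)$, then apply the medial law (the third axiom in Definition~\ref{def:midptAlg}) to swap the middle pair, and finish via commutativity of $m$:
\[
  m(a_\psign, m(a_\msign, x))
    = m\bigl(m(a_\psign, a_\psign), m(a_\msign, x)\bigr)
    = m\bigl(m(a_\psign, a_\msign), m(a_\psign, x)\bigr)
    = m(a_0, m(a_\psign, x))
  \text{.}
\]
The case $\pm=\msign$ is symmetric (swap $\psign$ and $\msign$ throughout).

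Although stated in the usual pointwise style, the argument is categorical: Cantor space decomposes as $2\times 2^\omega$ via $(\pm,s)\mapsto \pm s$, so two morphisms $2^\omega\to A$ in $\Loc$ coincide precisely when they agree after precomposition with each of the two inclusions $s\mapsto\psign s$ and $s\mapsto\msign s$, and that is what the calculation above checks. No substantive obstacle is expected; the whole argument is routine midpoint-algebra bookkeeping with the three axioms (idempotency, commutativity, medial law).
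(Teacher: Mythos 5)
Your proposal is correct and takes essentially the same route as the paper: unfold both sides via the defining recursion, then apply idempotency followed by the medial law to rearrange $m(a_\pm, m(a_\mp, x))$ into $m(m(a_\pm, a_\mp), m(a_\pm, x))$. The paper compresses the idempotency-plus-medial step into a single line without naming the axioms; your version just makes that step explicit, and the closing remark about the decomposition $2^\omega\cong 2\times 2^\omega$ correctly justifies the pointwise notation in the localic setting.
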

\begin{proof}%
  \begin{align*}
    M_{a_{\msign}a_{\psign}}\half(\pm s)  &  =M_{a_{\msign}a_{\psign}}(\pm\mp s)\\
    &  =m(a_{\pm},m(a_{\mp},M_{a_{\msign}a_{\psign}}s))\\
    &  =m(m(a_{\pm},a_{\mp}),m(a_{\pm},M_{a_{\msign}a_{\psign}}s))\\
    &  =m(a_{0},M_{a_{\msign}a_{\psign}}(\pm s))\text{.}%
  \end{align*}
\end{proof}

\begin{lemma}\label{lem:NPreservesmX}
  As maps from $\interval$ to $A$, we have
  \begin{enumerate}
  \item
    $Nm\langle\pm1,\interval\rangle=m\langle a_{\pm},A\rangle N$,
  \item
    $Nm\langle0,\interval\rangle=m\langle a_{0},A\rangle N$.
  \end{enumerate}
\end{lemma}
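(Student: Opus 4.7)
The plan is to exploit the fact that $c\colon 2^\omega \to \interval$ is a proper surjection (Theorem~\ref{thm:PropSurjn}) and in particular an epimorphism in $\Loc$, so to prove either equation of maps $\interval \to A$ it suffices to verify it after precomposition with $c$. Both verifications then reduce to computations on streams using the defining recursion $M(\pm s) = m(a_\pm, M(s))$ together with $c(\pm s) = m(\pm 1, c(s))$ and, for part (2), Lemma~\ref{lem:half}.

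For part (1), I would compute
\[
  Nm\langle\pm 1, \interval\rangle c\cdot(\pm s)
    = N(m(\pm 1, c(\pm s)))
\]
in two ways. On one hand, using $c(\pm s) = m(\pm 1, c(s))$ inside and then $N c = M$ with its recursion, this equals $m(a_\pm, M(\pm s)) = m(a_\pm, N c(\pm s))$, which is precisely $m\langle a_\pm, A\rangle N c \cdot (\pm s)$. Since $c$ is epi, cancelling $c$ yields the first equation. (Strictly one writes this as an equality of generalized points, or equivalently by diagram-chasing using $Nc = M$ and the midpoint recursion, without ever naming stream elements.)

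For part (2), the natural strategy is to route through the map $\half\colon 2^\omega \to 2^\omega$ of Lemma~\ref{lem:half}. A direct computation with equation~\eqref{eq:c} shows
\[
  c\cdot\half = m\langle 0, \interval\rangle \cdot c
    \colon 2^\omega \to \interval,
\]
since both send $\pm s$ to $\tfrac{\pm 1}{4} + \tfrac{c(s)}{4}$. Combining this with $Nc = M$ and Lemma~\ref{lem:half},
\[
  Nm\langle 0,\interval\rangle c
    = N c\,\half
    = M\,\half
    = m\langle a_0, A\rangle M
    = m\langle a_0, A\rangle N c.
\]
Cancelling $c$ on the right (as an epimorphism) gives the desired equation.

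I do not expect any serious obstacle. The only subtle point is justifying the cancellation of $c$: this is because a proper surjection of locales is in particular a regular epimorphism (the coequalizer description in Section~\ref{sec:Coequ} makes this explicit, but it already follows from surjectivity alone, which corresponds to $c^*$ being injective). The arithmetic identity $c\cdot\half = m\langle 0,\interval\rangle c$ is a short verification that I would carry out once and cite, and Lemma~\ref{lem:half} supplies the corresponding identity on the $A$-side without any further work.
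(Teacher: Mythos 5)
Your proof is correct and follows essentially the same approach as the paper: precompose with the surjection $c$ (which is epi) and verify the resulting equations pointwise from the defining recursions of $c$ and $M$, together with Lemma~\ref{lem:half} for part~(2). The only cosmetic difference is that you derive $c\circ\half = m\langle 0,\interval\rangle\circ c$ by direct arithmetic from equation~\eqref{eq:c}, whereas the paper obtains the same identity by applying Lemma~\ref{lem:half} to $c = M_{-1,+1}$ (noting $a_0 = m(-1,1) = 0$ in $\interval$).
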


\begin{proof}
Since $c$ is a surjection, it suffices to show equality when these are
composed with $c$.

(1)%
\begin{align*}
Nm\langle\pm1,\interval\rangle c(s)  &  =Nm(\pm1,c(s))=Nc(\pm s)=M(\pm s)\\
&  =m(a_{\pm},M(s))=m(a_{\pm},Nc(s))=m\langle a_{\pm},A\rangle Nc(s)\text{.}%
\end{align*}

(2)%
\begin{align*}
Nm\langle0,\interval\rangle c(s)  &  =Nm(0,c(s))\\
&  =Nc\half(s)\text{ (by Lemma~\ref{lem:half}, using }c=M_{-1,+1}%
\text{)}\\
&  =M\half(s)\\
&  =m(a_{0},M(s))\text{ (by Lemma~\ref{lem:half} again, using }%
M=M_{a_{\msign}a_{\psign}}\text{)}\\
&  =m\langle a_{0},A\rangle Nc(s)\text{.}%
\end{align*}
\end{proof}

To analyse preservation of midpoints we shall need to define a version of the
midpoint function that works entirely on sign sequences. However, it will
convenient to use sequences that may include $0$: so we shall use $3^{\omega}$
where we take $3=\{\psign,\msign,0\}$.
There is an obvious inclusion $i\colon 2^{\omega}\rightarrow3^{\omega}$.

We define $M_{0}\colon 3^{\omega}\rightarrow A$, similar to $M$,
but with the additional condition that $M_{0}(0s)=m(a_{0},M(s))$.
In other words, in Definition~\ref{def:itMidptAlg} the head map
$h \colon 3^\omega \to \interval$ takes $0s$ to $a_0$.
Then clearly $M=M_{0}i$.

We can do the same with $c$ instead of $M$, obtaining a unique map
$c_{0}\colon 3^{\omega}\rightarrow\interval$ such that $c_{0}(\pm s)=m(\pm
1,c_{0}(s)),c_{0}(0s)=m(0,c_{0}(s))$. Then $c=c_{0}i$.

\begin{lemma}\label{lem:M0}
  $M_{0}=Nc_{0}$.
\end{lemma}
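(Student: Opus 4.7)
The plan is to apply the uniqueness clause of the iterative property: if we can show that $Nc_0$ satisfies the same recursive characterization as $M_0$, then $M_0=Nc_0$ follows immediately. Recall that $M_0\colon 3^\omega\to A$ is the unique morphism arising from the iterative structure with head map $h\colon 3^\omega\to A$ defined by $h(\pm s)=a_{\pm}$, $h(0s)=a_0$, and tail map the usual tail on $3^\omega$. Thus $M_0$ is characterized by
\begin{align*}
  M_0(\pm s) & = m(a_{\pm},M_0(s))\text{,}\\
  M_0(0s)    & = m(a_0,M_0(s))\text{.}
\end{align*}

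The strategy is therefore to verify that $Nc_0$ satisfies both of these equations. For the first, apply the defining equation of $c_0$ on $\pm s$ and then Lemma~\ref{lem:NPreservesmX}~(1):
\[
  Nc_0(\pm s)=Nm(\pm 1,c_0(s))=Nm\langle\pm 1,\interval\rangle c_0(s)
    =m\langle a_{\pm},A\rangle Nc_0(s)=m(a_{\pm},Nc_0(s))\text{.}
\]
For the second, apply the defining equation of $c_0$ on $0s$ and then Lemma~\ref{lem:NPreservesmX}~(2):
\[
  Nc_0(0s)=Nm(0,c_0(s))=Nm\langle 0,\interval\rangle c_0(s)
    =m\langle a_0,A\rangle Nc_0(s)=m(a_0,Nc_0(s))\text{.}
\]

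With both recursive equations checked, uniqueness of the iteratively defined $M_0$ forces $Nc_0=M_0$, completing the proof. There is no real obstacle here: Lemma~\ref{lem:NPreservesmX} was crafted precisely to encode how $N$ interacts with the midpoint operations $m(\pm 1,-)$ and $m(0,-)$ on $\interval$, and these are exactly what appear when $c_0$ is unfolded one step on each of the three possible leading symbols in $3=\{\psign,\msign,0\}$. The only mild subtlety is simply to recognize that the universal property defining $M_0$ applies to the $3$-ary head/tail data on $3^\omega$ in just the same way as the binary data on $2^\omega$ defined $M$.
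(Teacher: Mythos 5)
Your proof is correct and follows essentially the same route as the paper: verify the two defining recursions of $M_0$ for $Nc_0$ using Lemma~\ref{lem:NPreservesmX}~(1) and~(2), then invoke uniqueness from the iterative property.
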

\begin{proof}%
  \[
    Nc_{0}(\pm s)=Nm(\pm1,c_{0}s)
      =m(a_{\pm},Nc_{0}s)\text{ (Lemma~\ref{lem:NPreservesmX} (1))}%
  \]%
  \[
    Nc_{0}(0s)=Nm(0,c_{0}s)=m(a_{0},Nc_{0}s)\text{ (Lemma~\ref{lem:NPreservesmX} (2))}%
  \]
  It follows that $Nc_{0}$ has the characterizing property of $M_{0}$.
\end{proof}

\begin{definition}
The \emph{sequence midpoint map} $m_{s}\colon 2^{\omega}\times2^{\omega}%
\rightarrow3^{\omega}$ is defined by%
\begin{align*}
m_{s}(\pm s_{1},\pm s_{2})  &  =\pm m_{s}(s_{1},s_{2})\\
m_{s}(\pm s_{1},\mp s_{2})  &  =0m_{s}(s_{1},s_{2})\text{.}%
\end{align*}
\end{definition}

\begin{lemma}\label{lem:ms}
  $m(M\times M)=M_{0}m_{s}$.
\end{lemma}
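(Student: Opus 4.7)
The plan is to invoke the uniqueness half of iterativity of $A$, showing that both $m(M\times M)$ and $M_0 m_s$ satisfy the same iterative recursion on $X=2^\omega\times 2^\omega$.

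First I would set up the head and tail data. Take $t\colon 2^\omega\times 2^\omega\to 2^\omega\times 2^\omega$ to be the pair of tail maps, $t(\sigma_1 s_1,\sigma_2 s_2)=(s_1,s_2)$, and $h\colon 2^\omega\times 2^\omega\to A$ to be the discrete map determined by the initial signs: $(\psign,\psign)\mapsto a_\psign$, $(\msign,\msign)\mapsto a_\msign$, and $(\psign,\msign),(\msign,\psign)\mapsto a_0$. Both are morphisms of locales (the second factors through the discrete $2\times 2\to A$). By Definition~\ref{def:itMidptAlg} applied to this $(h,t)$, there is a unique $F\colon 2^\omega\times 2^\omega\to A$ with $F=m\circ\langle h,F\circ t\rangle$.

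Next I would verify that $m(M\times M)$ satisfies this recursion. Writing out each of the two cases, and using the middle-four interchange law $m(m(a,b),m(c,d))=m(m(a,c),m(b,d))$ together with $m(x,x)=x$, one computes
\begin{align*}
  m(M\times M)(\pm s_1,\pm s_2)
    &= m(m(a_\pm,Ms_1),m(a_\pm,Ms_2)) = m(a_\pm,m(M\times M)(s_1,s_2))\text{,}\\
  m(M\times M)(\pm s_1,\mp s_2)
    &= m(m(a_\pm,Ms_1),m(a_\mp,Ms_2)) = m(a_0,m(M\times M)(s_1,s_2))\text{.}
\end{align*}
So $m(M\times M)=m\circ\langle h,m(M\times M)\circ t\rangle$.

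Then I would verify the analogous recursion for $M_0 m_s$. Directly from the definitions of $m_s$ and the defining equations of $M_0$,
\begin{align*}
  M_0 m_s(\pm s_1,\pm s_2) &= M_0(\pm m_s(s_1,s_2)) = m(a_\pm,M_0 m_s(s_1,s_2))\text{,}\\
  M_0 m_s(\pm s_1,\mp s_2) &= M_0(0\, m_s(s_1,s_2)) = m(a_0,M_0 m_s(s_1,s_2))\text{,}
\end{align*}
so $M_0 m_s=m\circ\langle h,M_0 m_s\circ t\rangle$ as well. By the uniqueness clause of iterativity applied to $(h,t)$, $m(M\times M)=M_0 m_s$.

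The only substantive step is the first recursion, where the middle-four law is essential; the rest is unpacking definitions. There is no real obstacle beyond confirming that the $(h,t)$ above genuinely qualify as input to Definition~\ref{def:itMidptAlg}, i.e.\ that the case split defining $h$ is a bona fide morphism, which is immediate from discreteness of $2\times 2$.
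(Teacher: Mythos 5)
Your proof is correct and follows essentially the same route as the paper's: both invoke uniqueness from iterativity of $A$ on $X=2^\omega\times 2^\omega$ and verify the same two recursion cases (using the medial/middle-four law for $m(M\times M)$). The only difference is that you spell out the head/tail maps $h$ and $t$ explicitly, while the paper leaves them implicit in stating the characterizing equations for the unique $f$.
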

\begin{proof}
  They are both the unique map $f\colon 2^{\omega}\times2^{\omega}\rightarrow A$
  such that $f(\pm s_{1},\pm s_{2})=m(a_{\pm},f(s_{1},s_{2}))$
  and $f(\pm s_{1},\mp s_{2})=m(a_{0},f(s_{1},s_{2}))$.
  For $m(M\times M)$,%
  \begin{align*}
    m(M\times M)(\pm s_{1},\pm s_{2})
      &  =m(m(a_{\pm},M(s_{1})),m(a_{\pm},M(s_{2})))\\
      &  =m(a_{\pm},m(M\times M)(s_{1},s_{2}))\text{,}\\
    m(M\times M)(\pm s_{1},\mp s_{2})
      &  =m(m(a_{\pm},M(s_{1})),m(a_{\mp},M(s_{2})))\\
      &  =m(m(a_{\pm},a_{\mp}),m(M(s_{1}),M(s_{2})))\\
      &  =m(a_{0},m(M\times M)(s_{1},s_{2}))\text{.}%
  \end{align*}
  For $M_{0}m_{s}$,%
  \begin{align*}
    M_{0}m_{s}(\pm s_{1},\pm s_{2})  &  =M_{0}(\pm m_{s}(s_{1},s_{2}))\\
    &  =m(a_{\pm},M_{0}m_{s}(s_{1},s_{2}))\text{,}\\
    M_{0}m_{s}(\pm s_{1},\mp s_{2})  &  =M_{0}(0m_{s}(s_{1},s_{2}))\\
    &  =m(a_{0},M_{0}m_{s}(s_{1},s_{2}))\text{.}%
  \end{align*}
\end{proof}

\begin{corollary}\label{cor:ms}
  $m(c\times c)=c_{0}m_{s}$.
\end{corollary}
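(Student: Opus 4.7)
The plan is to observe that this is a direct specialisation of Lemma~\ref{lem:ms}. The constructions of $M$ and $M_0$ there were carried out for an arbitrary iterative midpoint algebra $A$ equipped with two points $a_{\psign}, a_{\msign}$, and the lemma asserted $m(M\times M) = M_0 m_s$ as maps $2^\omega \times 2^\omega \to A$.

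I would instantiate that general fact at $A = \interval$ with $a_{\psign} = +1$ and $a_{\msign} = -1$, whereupon $a_0 = m(-1, +1) = 0$. With these choices, the map $M = M_{a_{\msign} a_{\psign}}$ is by Definition~5.1 exactly $c$, and the map $M_0$ (the unique extension to $3^\omega$ satisfying $M_0(0s) = m(a_0, M(s))$) coincides by construction with $c_0$, the extension introduced just before Lemma~\ref{lem:M0} and characterised by $c_0(\pm s) = m(\pm 1, c_0(s))$ and $c_0(0s) = m(0, c_0(s))$. Substituting into the conclusion of Lemma~\ref{lem:ms} yields $m(c\times c) = c_0 m_s$.

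There is no real obstacle here; the only thing to check is that the universal characterisations used to define $M$, $M_0$, $c$ and $c_0$ really do match up under the specialisation, which is immediate from inspecting the defining equations. No appeal to the properness of $c$ or to the coequalizer property is needed for this step.
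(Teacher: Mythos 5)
Your proposal matches the paper's proof exactly: the paper's entire argument is ``Replace $A$ by $\interval$,'' and you correctly identify that under the instantiation $a_{\pm}=\pm 1$ (so $a_0=0$) the maps $M$ and $M_0$ become $c$ and $c_0$ by their defining universal properties. You have simply spelled out the specialisation that the paper leaves implicit.
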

\begin{proof}
  Replace $A$ by $\interval$.
\end{proof}

\begin{proposition}
  $N\colon \interval\rightarrow A$ preserves midpoints.
\end{proposition}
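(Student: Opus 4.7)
The plan is to exploit the fact that $c$ is a proper surjection, together with the two identities just established in Lemma~\ref{lem:ms} and Corollary~\ref{cor:ms}, to reduce the required equality $Nm = m(N\times N)$ to an equality of maps out of $2^\omega\times 2^\omega$ that has essentially been computed already.

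First I would note that since $c$ is a proper surjection (Theorem~\ref{thm:PropSurjn}), and proper surjections are stable under product (this is the reason properness was emphasised earlier), the product map $c\times c\colon 2^\omega\times 2^\omega\to\interval\times\interval$ is also a surjection, hence an epimorphism in $\Loc$. Therefore, to establish $Nm = m(N\times N)$, it suffices to show that the two maps agree after precomposition with $c\times c$.

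Next I would compute each side composed with $c\times c$:
\begin{align*}
  Nm\circ(c\times c)
    &= N\circ m(c\times c)
     = N c_0 m_s
     = M_0 m_s\text{,}\\
  m(N\times N)\circ(c\times c)
    &= m(Nc\times Nc)
     = m(M\times M)
     = M_0 m_s\text{,}
\end{align*}
where the first line uses Corollary~\ref{cor:ms} followed by Lemma~\ref{lem:M0}, and the second line uses the definition $Nc = M$ together with Lemma~\ref{lem:ms}. The two right-hand sides coincide, and cancelling the epi $c\times c$ yields $Nm = m(N\times N)$, as required. This also finishes the proof that $\interval$ is an interval object, since it supplies the last missing property of $N$ in Definition~\ref{def:intervalObject}~(3); uniqueness of $N$ is already assured by its construction as the unique factorization of $M$ through the coequalizer $c$.

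There is no real obstacle once the machinery is assembled: every nontrivial ingredient has been proved. The only subtle point is the appeal to the fact that $c\times c$ is a surjection, which is precisely where properness of $c$ is essential — without it one could not conclude that the product $c\times c$ is epimorphic in $\Loc$, and the reduction from $\interval\times\interval$ to $2^\omega\times 2^\omega$ would fail.
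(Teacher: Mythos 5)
Your proof is correct and follows essentially the same route as the paper: both reduce the equality $Nm = m(N\times N)$ to an equality after precomposition with the epimorphism $c\times c$, using Lemma~\ref{lem:ms}, Lemma~\ref{lem:M0} and Corollary~\ref{cor:ms} in exactly the way the paper does. The only difference is cosmetic — you compute the two composites separately rather than chaining the equalities into one string.
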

\begin{proof}%
  \begin{align*}
    m(N\times N)(c\times c)
      &  =m(M\times M)=M_{0}m_{s}\text{ (Lemma~\ref{lem:ms})}\\
      &  =Nc_{0}m_{s}\text{ (Lemma~\ref{lem:M0})}\\
      &  =Nm(c\times c)\text{ (Corollary~\ref{cor:ms}).}%
  \end{align*}
  We now use the fact that $c\times c$ is a surjection, following from the fact
  that $c$ is a proper surjection.
\end{proof}

Putting together all the results of this section, we obtain --

\begin{theorem}
  $\interval=[-1,1]$ is an interval object in the category $\Loc$ of locales.
\end{theorem}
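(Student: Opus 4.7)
The plan is to assemble the components developed over the preceding sections. Definition~\ref{def:intervalObject} demands three things. Clause~(1), the endpoints $-1, +1$, is built in. Clause~(2), iterativity of the midpoint algebra $\interval$, was established in Theorem~\ref{thm:IIterativeMPA}. So what remains is the universal property~(3): given any iterative midpoint algebra $A$ with points $a_{\msign}, a_{\psign}$, produce a unique midpoint homomorphism $N \colon \interval \to A$ with $N(\pm 1) = a_{\pm}$.

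For existence, I would first form $M = M_{a_{\msign}a_{\psign}} \colon 2^{\omega} \to A$ using Definition~\ref{def:MPlusMinus}, which is available because $A$ is iterative. The proposition in Section~\ref{sec:Initiality} shows $M \circ u_{\msign} = M \circ u_{\psign}$, so by the coequalizer property of $c$ established in Section~\ref{sec:Coequ} there is a unique $N \colon \interval \to A$ with $Nc = M$. The endpoint condition is immediate from Lemma~\ref{lem:Mpmomega}:
\[
  N(\pm 1) = Nc(\pm^{\omega}) = M(\pm^{\omega}) = a_{\pm}.
\]
Midpoint preservation is precisely the content of the final proposition of Section~\ref{sec:Initiality}, which uses the chain of identities $m(M \times M) = M_{0} m_{s} = Nc_{0}m_{s} = Nm(c\times c)$ together with the fact that $c\times c$ is a surjection (this in turn depending on the properness of $c$, Theorem~\ref{thm:PropSurjn}).

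For uniqueness, suppose $N' \colon \interval \to A$ is any midpoint homomorphism sending the endpoints to $a_{\pm}$. Then for every $s$,
\[
  N'c(\pm s) = N'm(\pm 1, c(s)) = m(N'(\pm 1), N'c(s)) = m(a_{\pm}, N'c(s)),
\]
so $N'c$ satisfies the defining recurrence of $M_{a_{\msign}a_{\psign}}$. By the uniqueness clause in the iterativity of $A$ we conclude $N'c = M = Nc$, and since $c$ is a (proper) surjection in $\Loc$ it is epic, giving $N' = N$.

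The main obstacle has, in truth, already been overcome in the preceding sections: iterativity of $\interval$ via the ball-domain construction of Theorem~\ref{thm:IIterativeMPA}, and the properness of the surjection $c$ via the preframe coverage argument of Theorem~\ref{thm:PropSurjn}, from which both the coequalizer description and the surjectivity of $c\times c$ are extracted. Relative to those inputs, the theorem itself is a short gluing argument of the kind sketched above.
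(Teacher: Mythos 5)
Your proposal is correct and follows essentially the same path as the paper: obtain $M = M_{a_{\msign}a_{\psign}}$ from iterativity of $A$, factor it as $Nc$ via the coequalizer property of $c$, and deduce midpoint-preservation of $N$ from $m(M\times M)=Nm(c\times c)$ together with $c\times c$ being a surjection. You are in fact slightly more explicit than the paper in two small places — verifying the endpoint condition $N(\pm 1)=a_{\pm}$ via Lemma~\ref{lem:Mpmomega}, and spelling out the uniqueness argument (any candidate $N'$ gives $N'c$ satisfying the defining recurrence of $M$, hence $N'c = M$, hence $N'=N$ since $c$ is epic) — whereas the paper handles uniqueness more tersely by observing at the outset that $Nc$ is forced to equal $M$ and then invoking uniqueness of the coequalizer factorization.
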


\section{Conclusions}\label{sec:Conc}

The main result was about $\interval$ as interval object,
but along the way we also showed that the map $c\colon 2^\omega \to \interval$,
evaluating infinite binary expansions,
is a proper localic surjection that is easily expressed as a coequalizer.
This result has some interest in itself.
In classical topology, $c$ is a surjection because for every Dedekind section
there is an infinite expansion; however, this uses choice.
Essentially, the surjectivity of $c$, in other words the monicity of $c^\ast$,
is a conservativity result, and this is known as a constructive substitute
for using choice to find the existence of points.
See, for example, the constructive Hahn-Banach Theorem in \cite{MulvPellet}.
However, our result is unusual in using a \emph{proper} surjection rather than an \emph{open} one.

The proof of proper surjectivity used the preframe coverage theorem in a standard way.
However, it was more intricate than I expected.
I had a hope to use the metric space theory again for $2^\omega$,
but was put off by the fact that to get $2^{\omega}$ as a completion of $2^{\ast}$
requires each finite sequence $s$ to be identified with an infinite sequence,
either $s\msign^{\omega}$ or $s\psign^{\omega}$: this breaks symmetry.
I conjecture there's a way forward using partial metrics,
so that $2^{\ast}$ is metrized with $d(s,s)=2^{1-|s|}$.
However, we do not at present have a theory of localic completion of partial metrics.
It would be easier with $c_{0}\colon 3^{\omega}\rightarrow\interval$,
but then that would presumably make Section~\ref{sec:Coequ} harder.
In any case, the result with $2^\omega$ is stronger.

The main result, on $\interval$ as an interval object, free on two points,
suggests generalization to simplices, free on their vertices.
I conjecture that similar techniques to prove this, using infinite sequences,
could be developed using barycentric subdivision.

\bibliographystyle{amsplain}
\bibliography{MyBiblio}
\end{document}